\documentclass[reqno]{amsart}
\usepackage{amsmath}
\usepackage{amssymb}
\usepackage{mathabx}
\usepackage{paralist}
\usepackage[colorlinks=true]{hyperref}
\hypersetup{urlcolor=blue, citecolor=red}
\usepackage[title]{appendix}

\theoremstyle{plain}
\newtheorem{theorem}{Theorem}[section]
\newtheorem{proposition}[theorem]{Proposition}
\newtheorem{lemma}[theorem]{Lemma}
\newtheorem{corollary}[theorem]{Corollary}
\theoremstyle{definition}

\theoremstyle{remark}


\numberwithin{equation}{section}
\numberwithin{theorem}{section}
\numberwithin{figure}{section}

\newcommand{\mc}[1]{{\mathcal #1}}

\newcommand{\bb}[1]{{\mathbb #1}}

\newcommand{\rme}{\mathrm{e}}

\newcommand{\rmd}{\mathrm{d}}
\newcommand{\eps}{\varepsilon}


\newcommand{\supp}{\mathop{\rm supp}\nolimits}


\title[Time evolution of concentrated vortex rings with large radius]{Long time evolution of concentrated vortex rings with large radius}

\author[P.\ Butt\`a]{Paolo Butt\`a} 
\address{Dipartimento di Matematica\\
Sapienza Universit\`a di Roma\\
P.le Aldo Moro 5, 00185 Roma\\
Italy}

\email{butta@mat.uniroma1.it}

\author[G.\ Cavallaro]{Guido Cavallaro} 
\address{Dipartimento di Matematica\\
Sapienza Universit\`a di Roma\\
P.le Aldo Moro 5, 00185 Roma\\
Italy}

\email{cavallar@mat.uniroma1.it}

\author[C.\ Marchioro]{Carlo Marchioro}
\address{Dipartimento di Matematica\\
Sapienza Universit\`a di Roma\\
P.le Aldo Moro 5, 00185 Roma\\
Italy\\
and\\
International Research Center M\&MOCS\\ 
Universit\`a di L'Aquila\\
Palazzo Caetani\\
04012 Cisterna di Latina (LT)\\
Italy}

\email{marchior@mat.uniroma1.it}

\date{}

\begin{document}

\begin{abstract} 
We study the time evolution of an incompressible fluid with axial symmetry without swirl when the vorticity is sharply concentrated on $N$ annuli of radii of the order of $r_0$ and thickness $\eps$. We prove that when $r_0= |\log \eps|^\alpha$, $\alpha>1$, the vorticity field of the fluid converges for $\eps \to 0$ to the point vortex model, in an interval of time which diverges as $\log|\log\eps|$. This result generalizes a previous paper  \cite{CavMarJMP}, that assumed $\alpha>2$ and in which the convergence was proved for short times only.
\end{abstract}

\keywords{Incompressible Euler flow, vortex rings, point vortex model, axial symmetry}

\subjclass[2020]{76B47; 37N10}

\maketitle

\thispagestyle{empty}

\section{Introduction} \label{sec1}

In the present paper, we study the motion of an incompressible inviscid fluid with axial symmetry without swirl when the initial vorticity is very concentrated on $N$ annuli of radii of the order of $r_0$ and thickness $\eps$. In a suitable joined limit $r_0\to \infty$,  $\eps \to 0$,  we prove the convergence of this motion to the so-called point vortex system, see, e.g., \cite{mpulv}.

The motion of an incompressible inviscid fluid is described by the Euler equations, which in case of density equal to one, in three dimensions, are given by
\begin{gather}
\label{1.1}
(\partial_t + (u\cdot \nabla))\omega=(\omega\cdot \nabla )u\,, 
\\ \label{1.2}
\nabla \cdot u =0 \,,
\end{gather}
where $\omega = \nabla \wedge u$ is the vorticity, accompanied by the initial condition for the velocity field $u(x,0)=u^{0}(x)$, and the boundary condition. We suppose that the velocity vanishes as $|x| \to \infty$, which allows to reconstruct the velocity from the vorticity,
\begin{equation}
\label{1.3}
u(x,t)= - \frac{1}{4\pi} \int\!\rmd y\, \frac{x-y }{|x-y|^3} \wedge \omega(y,t)\,.
\end{equation}
We use now cylindrical coordinates $(z,r,\theta)$ and restrict our attention to initial data with axial symmetry without swirl, i.e., the component $u^0_\theta$ is zero and $u^0_z, u^0_r$ do not depend on $\theta$. Since the time evolution preserves this symmetry, the corresponding solutions have the same form,
\begin{equation}
\label{1.4}
u(x,t) = (u_z,u_r,u_\theta) = (u_z(z,r,t), u_r(z,r,t), 0)\,.
\end{equation}
In this case, the  vorticity reduces to a scalar,
\begin{equation}
\label{1.5}
\omega = (0,0,\omega_{\theta}) = (0,0, \partial_zu_r - \partial_r u_z)\,,
\end{equation}
and Eqs.~\eqref{1.1}-\eqref{1.2} become
\begin{gather}
\label{1.6}
\partial_t \omega_\theta  + (u_z\partial_z + u_r\partial_r) \omega_\theta - \frac{u_r \omega_\theta}{r} = 0 \,,
\\ \label{1.7}
\partial_z(r \ u_z)+\partial_r(r \ u_r) = 0\,.
\end{gather}
From now on, we denote $\omega_{\theta}$ by $\omega$. Finally, Eq.~\eqref{1.3} reads,
\begin{align}
\label{1.8}
u_z & = - \frac{1}{2\pi} \int\! \rmd z' \!\int_0^\infty\! r' \rmd r' \! \int_0^\pi\!\rmd \theta \, \frac{\omega(z',r',t) (r\cos\theta - r')}{[(z-z')^2 + (r-r')^2 + 2rr'(1-\cos\theta)]^{3/2}} \,,
\\ \label{1.9}
u_r & = \frac{1}{2\pi} \int\! \rmd z' \!\int_0^\infty\! r' \rmd r' \! \int_0^\pi\!\rmd \theta \, \frac{\omega(z',r',t) (z - z') \cos\theta}{[(z-z')^2 + (r-r')^2 + 2rr'(1-\cos\theta)]^{3/2}} \,.
\end{align}
Hence, the axially symmetric solutions to the Euler equations are given by the solutions to Eqs.~\eqref{1.5}-\eqref{1.9}. Eq.~\eqref{1.6} means that the quantity $\omega/r$ remains constant along the flow generated by the velocity field, i.e.,
\begin{equation}
\label{1.10}
\frac{\omega(z(t),r(t),t)}{r(t)}= \frac{\omega(z(0),r(0),t)}{r(0)} \, ,
\end{equation}
where $(z(t),r(t))$ solve
\begin{equation}
\label{1.11}
\dot{z}(t)=u_z(z(t),r(t),t) \, ,  \qquad  \dot{r}(t)=u_r(z(t),r(t),t) \, .
\end{equation}
It is possible to introduce an equivalent weak formulation of Eq.~\eqref{1.6} that allows to consider non-smooth initial data; by a formal integration by parts we obtain indeed
\begin{equation}
\label{1.12}
\frac{\rmd}{\rmd t} \omega_t[f,t]=\omega_t \big[u_z\partial_z f + u_r\partial_r f + \partial_tf \big] \, ,
\end{equation}
where $f=f(z,r,t)$ is a bounded smooth test function and
\begin{equation}
\label{1.13}
\omega_t [f,t] :=\int\!\rmd z \int_0^\infty\! \rmd r\, \omega (z,r,t) f(z,r,t)\,.
\end{equation}
The existence of global axially symmetric solutions both for the Euler and Navier-Stokes equations has been established many years ago \cite{Lad68,UkY68}, see also \cite{Fe-Sv,Gal13,Gal12} for more recent results.
Global  in time  existence and uniqueness of a weak solution to the associated Cauchy problem holds when the initial vorticity is a bounded function with compact support contained in the open half-plane  $\Pi:=\{(z,r)\colon  r>0\}$, see for instance \cite[pag.~91]{mpulv} or \cite[Appendix]{CS}. In particular, it can be shown that the support of the vorticity remains in the open half-plane $\Pi$ at any time. A point in the half-plane $\Pi$ corresponds to a circumference in the whole space. The special class of axisymmetric solutions without swirl are called sometimes \textit{vortex rings}, because there exist particular solutions whose shape remains constant in time (the so-called steady vortex ring) and translate in the $z$-direction with constant speed (see for instance \cite{Fra70}). The existence and the properties of these solutions is an old question. For a rigorous proof by means of variational methods see \cite{AmS89,FrB74}. For references on axially symmetric solution without swirl see also the review paper \cite{ShL92}.

As said before, we want to describe the evolution of the vorticity when it is initially concentrated in $N$ annuli of radii of the order of $r_0$ and thickness $\eps \in (0,1)$, in a joined limit $\eps\to 0$, $r_0 \to \infty$. This asymptotic regime can be appropriately formulated in the system of coordinates
\begin{equation} 
\label{coordinate} 
x=(x_1, x_2) := (z, r-r_0)\,.
\end{equation}
More precisely, we assume that initially the vorticity is concentrated in $N$ ``small blobs'', i.e., it can be expressed as
\begin{equation}
\omega_\eps(x,0)= \sum_{i=1}^N \omega_{i,\eps}(x,0)\,,
\label{in_data}
\end{equation}
where each $\omega_{i,\eps}(x,0)$  is a non-negative or non-positive function such that, denoting by $\Sigma(\xi| \rho)$ the open disk of center $\xi$ and radius $\rho$ in $\mathbb{R}^2$,
\begin{equation}
\label{in_data2}
\Lambda_{i,\eps}(0):= \supp\, \omega_{i,\eps}(\cdot,0) \subseteq \Sigma(z_i | \eps)\,, \quad i=1,\ldots,N\,,
\end{equation}
being $\eps>0$ a small parameter and $\{z_i\}_{i=1, \dots, N}$ points in $\bb R^2$ fixed independently of $\eps$ and $r_0$. As we are interested in a jointed limit $\eps\to 0$ and $r_0\to \infty$, we can suppose without loss of generality that
\[
\Sigma(z_i | \eps) \subset \Pi =\{x \colon  r_0 + x_2> 0\}\,, \qquad  \Sigma(z_i |\eps) \cap \Sigma(z_j |\eps) =\emptyset \quad \forall i \ne j\,.
\]
We finally assume that, for any $i=1, \dots, N$,
\begin{equation}
a_i := \int\!  \rmd x\, \omega_{i,\eps}(x,0) \in \bb R
\end{equation}
is independent of $\eps$, and
\begin{equation}
\label{mass}
|\omega_{i,\eps}(x,0)| \le M \eps^{- 2} \; , \qquad M>0  \, .
\end{equation}

We are going to investigate in which situations the time evolution of these states almost preserves its form. In Theorem \ref{teoEA} below, we will prove that if the assumption $r_0=|\log\eps|^\alpha$ with $\alpha > 1$ is fulfilled then the evolved state $\omega_\eps(x,t)$ can be written as
\begin{equation}
\label{t_data}
\omega_\eps(x,t) = \sum_{i=1}^N \omega_{i,\eps}(x,t)\,,
\end{equation}
where each $\omega_{i,\eps}(x,t)$ is a non-negative or non-positive function such that
\[
\Lambda_{i,\eps}(t):= \text{supp} \, \omega_{i,\eps}(\cdot, t) \subset \Sigma(z_i(t) | r_t(\eps))\,,
\]
with
\[
\Sigma(z_i(t) | r_t(\eps)) \cap  \Sigma(z_j(t) | r_t(\eps)) = \emptyset   \qquad \forall\, i \ne j\,,
\]
being $r_t(\eps)$ a positive function, vanishing for $\eps\rightarrow 0$, and
$\{z_i(t)\}_{i=1,\ldots,N} \subset \mathbb{R}^2$ solution to the point-vortex model, i.e., the dynamical system defined by
\begin{equation}
\label{pvmodel}
\dot{z}_i(t) = \sum_{j \ne i} a_j K(z_i(t)- z_j(t))\,, \quad z_i(0)=z_i\,, \qquad i=1, \ldots, N\,,
\end{equation}
where
\begin{equation}
\label{nucleoK}
K(x)=-\frac{1}{2\pi}\nabla^{\perp}\log|x|, \qquad \nabla^{\perp}=(\partial_2,-\partial_1),
\end{equation}
with $-\frac1{2\pi} \log|x|$ the fundamental solution of the Laplace operator in $\mathbb{R}^2$. When all the $a_i$ have the same sign there is a global solution, otherwise there is a finite time at which a collapse (that is, a pair of $z_i$ arriving at the same point) or a $z_i$ going to infinity can occur. However, these events are exceptional, see for instance \cite{mpulv}.

This dynamical system has been introduced a long time ago by Helmholtz, as a particular \textit{solution} to the Euler equations \cite{Hel67}, and investigated by many authors \cite{Kel,Kir,Poi}. It can be used to describe the time evolution of irregular initial data and it is the basis of an approximation method (the so-called vortex method) in which $N \to \infty$ and $a_i\to 0$ (for more information, see for instance the textbooks \cite{mpulv}, \cite{MaB02}, and the references in \cite{CGP14}).

Even if solutions to Eq.~\eqref{pvmodel} are not solution of the Euler equations, they can be an average of different solutions that in $\bb R^3$ are clusters of straight lines of vorticity, as it is discussed in \cite{CapMar,Mar88, Mar, MaP,MaP93,Turk}. In \cite{marc}, it is shown that the same happens when the straight lines are changed into large enough annuli, with radius of the order $r_0(\eps)= \eps^{-\alpha}$, for any $\alpha>0$ and for any finite time, result which has been extended in \cite{CS} to long times. 

In the present paper, we assume a weaker dependence of $r_0$ on $\eps$, precisely we will show that the relation with the point vortex model remains valid for long times, when considering radii of the order $|\log\eps|^\alpha$, $\alpha>1$, thus improving what obtained in \cite{CavMarJMP}, in which it is assumed $\alpha>2$ and the convergence was proved for short times only. This is achieved by means of a more refined application of the iterative method with respect to \cite{CavMarJMP}, borrowing some ideas recently introduced in \cite{butcavmar} to study the case $\alpha=1$. In particular, the splitting into two different iterative methods and the extension of the convergence time by repeated applications of the former iterative methods.

The case $\alpha=1$ describes a critical regime, in which the self-induced field acting on each ring (responsible of the uniform drift along the symmetry axis) and the interaction with the other rings are of the same order. In this case, as conjectured in \cite{marneg} and recently proved in \cite{butcavmar}, the rings still preserve their shape and move according to the dynamical system defined by
\begin{equation}
\label{dyn2}
\dot{z}_i(t)= \sum_{j \ne i} a_j K(z_i(t)- z_j(t)) + a_i \begin{pmatrix} 1 \\ 0 \end{pmatrix} \,, \quad z_i(0) = z_i \,, \qquad i=1, \ldots, N\,.
\end{equation}
Concerning the content of \cite{butcavmar}, we notice that Eq.~\eqref{dyn2} admit periodic solutions, which correspond to leapfrogging dynamics of the vortex rings. Moreover, beside the convergence is proved for short times only, it is possible to chose initial data for which the convergence to a periodic solution cover several periods. This provide a mathematical justification of the leapfrogging, which was first proved in \cite{DDMW} (but with different techniques and scaling).

For $\alpha=0$ (see \cite{ben} for a vortex ring alone and \cite{butmar2, BCM} for several rings) the dynamics converges to simple translations parallel to the symmetry axis with constant speed, while the case $0<\alpha<1$  has not been studied explicitly, but we believe that the behavior is analogous to the case $\alpha=0$.

We finally mention that also the case of viscid fluids is discussed in the literature (mainly in the limit of small viscosity) \cite{BCM2,BrM11,CS,Gal11,Gal14,Lad68, Mar90,marcNS,Mar07,UkY68}, but this topic is out of the scope of the present analysis.

\section{Main result} 
\label{sec:2}

{\textit{A warning on the notation}}. Hereafter in the paper, we denote by $C$
a generic positive constant (eventually changing from line to line) which is independent of the parameter $\eps$ and the time $t$.

We fix $\alpha>1$, set $r_0 = r_0(\eps) = |\log\eps|^\alpha$, and consider an initial vorticity as specified in Eqs.~\eqref{in_data}-\eqref{mass}. Given the velocity field $u(x,t)$, we define the trajectory of a fluid element starting at $x$ as the solution of the integral equation
\[
\phi_t(x) = x + \int_0^t\!\rmd s\, u(\phi_s(x),s)\,,
\]
with components $\phi_t(x)= \left( \phi_{t,1}(x),  \phi_{t, 2}(x) \right)$. Since the quantity $\omega/r$ remains constant along the flow generated by the velocity field, see Eq.~\eqref{1.10}, the evolution at time $t$ can be decomposed as in Eq.~\eqref{t_data} provided
\begin{equation}
\label{term i EA}
\omega_{i, \eps}(x,t) :=  \frac{r_0 + x_2}{r_0+\phi_{-t, 2}(x)} \omega_{i, \eps}(\phi_{-t}(x),0)\,.
\end{equation}
In particular, the support of $\omega_{i,\eps}(\cdot, t)$ is given by $\Lambda_{i,\eps}(t) = \{\phi_t(x)\colon x\in \Lambda_{i,\eps}(0)\}$. Since $\Lambda_{i,\eps}(t) \subset \{x\colon x_2> -r_0\}$, hereafter we extend $\omega_{i, \eps}(x,t)$ to a function defined on the whole plane $\bb R^2$ by setting $\omega_{i, \eps}(x,t) = 0$ if $x_2 \le - r_0$.

We remark that each $\omega_{i,\eps}(x,t)$ preserves the initial sign (this is obvious), whence, by the assumptions on the initial datum, it is a non-negative or non-positive function. Moreover, it preserves the mass, i.e.,
\begin{equation}
\label{norme}
\int\!\rmd x\, \omega_{i, \eps}(x,t) = a_i \quad \forall\, t\ge 0\,.
\end{equation}
Indeed, Eq.~\eqref{norme} is a direct consequence of the conservation of $\omega/r$, after integrating in $\bb R^3$, adopting cylindrical coordinates, and using that the volume is conserved by the Euler flow.

Let us call $\{z_i(t) \}_{i=1, \ldots, N}$ the solution to the point vortex system Eq.~\eqref{pvmodel} with intensities $a_i$ and initial data the point $\{z_i\}_{i=1,\ldots,N}$ appearing in Eq.~\eqref{in_data2}. As already remarked, such solution is defined globally in time apart from a zero measure set of initial data. Suppose now that $\{z_i\}_{i=1,\ldots,N}$ are chosen such that the solution to Eq.~\eqref{pvmodel} is global. As discussed before, under appropriate assumptions on the radius $r_0$, we expect the fluid motion converges to the point vortex system for fixed positive time. On the other hand, as time goes by, small filaments of fluid could move very far away, so we cannot expect that convergence takes place uniformly in time. Since in a realistic situation the parameter $\eps$ is not zero, a natural question is to characterize the larger time interval on which this approximation remains valid, for small but positive values of the parameter $\eps$. 

Our analysis show that this maximal time interval can be indeed quite long, i.e., diverging as $\eps\to 0$. More precisely, we prove that this is the case under the assumption that the initial configuration $\{z_i\}_{i=1,\ldots,N}$, in addition to the request of existence of global solution $\{z_i(t)\}_{i=1,\ldots,N}$, satisfies the stronger property that
\begin{equation}
\label{R_m}
R_m := \inf_{t \ge 0} \min_{i \neq j}  |z_i(t) -z_j(t)| >0 \,.
\end{equation}

To state precisely the result we need some preliminary notation. We fix
\begin{equation}
\label{betabeta}
0 < \beta < \frac{\alpha-1}{4}
\end{equation}
and define
\begin{equation}
\label{T_eps_beta}
T_{\eps,\beta} := \min_i \sup\left\{t>0 \colon |x-z_i(s) |\le |\log\eps|^{-\beta} \;\; \forall x\in \Lambda_{i,\eps}(s) \;\; \forall s\in[0, t]  \right\}\,.
\end{equation}
Otherwise stated, $T_{\eps, \beta}$ is the first time (if any) in which a filament reaches the boundary of  $\bigcup_i \Sigma(z_i(t) |\, |\log\eps|^{-\beta})$. We observe that, eventually as $\eps \to 0$, by continuity $T_{\eps,\beta}>0$ in view of Eq.~\eqref{in_data2} and, by Eq.~\eqref{R_m},
\begin{equation}
\label{Rm/2}
\mathrm{dist}\big(\Sigma(z_i(t) |\, |\log\eps|^{-\beta}),\Sigma(z_j(t) |\, |\log\eps|^{-\beta}) \big) \ge \frac{R_m}2 \quad \forall\,i\ne j\, \;\; \forall\, t\ge 0\,,
\end{equation}
which implies
\begin{equation}
\label{Rm3}
\mathrm{dist}\big(\Lambda_{i,\eps}(t),\Lambda_{j,\eps}(t) \big) \ge \frac{R_m}2 \quad \forall\,i\ne j\, \;\; \forall\, t \in [0,T_{\eps,\beta}]\,.
\end{equation}
Hereafter, we shall always assume $\eps$ so small that $T_{\eps,\beta}>0$ and Eq.~\eqref{Rm/2} hold true.

Clearly, $T_{\eps, \beta}$ gives a lower bound of the time horizon where the point vortex approximation is valid. We can now state the main result of the paper, which is achieved by adapting the strategies given in \cite{butmar,CavMarJMP}, with some new ideas introduced in \cite{butcavmar}.

\begin{theorem} 
\label{teoEA}
Let $r_0 = |\log\eps|^{\alpha}$ with $\alpha>1$ and assume that the initial vorticity satisfies Eqs.~\eqref{in_data}-\eqref{mass} and Eq.~\eqref{R_m}. Then, for each $\beta\in \big(0, \frac{\alpha-1}4\big)$ there exist $\eps_0>0$ and $\zeta >0$ such that 
\[
T_{\eps, \beta} \ge \zeta \log|\log\eps| \qquad \forall \eps\in (0, \eps_0)\,.
\]
\end{theorem}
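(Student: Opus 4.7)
Fix $\beta$ as in \eqref{betabeta} and set $T^\star := T_{\eps,\beta}$. By continuity $T^\star > 0$, and on $[0,T^\star]$ each support $\Lambda_{i,\eps}(t)$ lies inside $\Sigma(z_i(t)|\,|\log\eps|^{-\beta})$, whose pairwise distances are at least $R_m/2$ by Eq.~\eqref{Rm/2}. The plan is to run a bootstrap on this interval and show that the blob radius cannot reach the threshold $|\log\eps|^{-\beta}$ before $t \sim \log|\log\eps|$, which, by a standard continuity argument, upgrades the a priori hypothesis and yields the claim.

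\textbf{Velocity decomposition.} For $x\in\Lambda_{i,\eps}(t)$, split $u(x,t) = u_i^{\mathrm{self}}(x,t) + \sum_{j\ne i} u_j(x,t)$ by linearity of the integrals \eqref{1.8}--\eqref{1.9}. For $j\ne i$, the point $x$ lies at distance $\ge R_m/4$ from $\Lambda_{j,\eps}(t)$, so expanding the axisymmetric kernel in powers of $1/r_0 = |\log\eps|^{-\alpha}$ and replacing $y$ by $z_j(t)$ inside $\Lambda_{j,\eps}(t)$ gives
\[
u_j(x,t) \;=\; a_j\, K(x - z_j(t)) \;+\; O\!\big(|\log\eps|^{-\alpha}\big) \;+\; O\!\big(|\log\eps|^{-\beta}\big),
\]
where the first error comes from the three-dimensional geometric correction and the second from the Lipschitz bound on $K$ over $\Lambda_{j,\eps}(t)$. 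For the self-field, the classical concentrated-ring computation yields an essentially constant drift of magnitude $|\log(r_0/\eps)|/r_0 = O(|\log\eps|^{1-\alpha})$ along the $x_1$-axis, plus a rotational piece whose radial component around the blob's center of vorticity vanishes by symmetry.

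\textbf{Two-level iteration.} Introduce the center of vorticity $B_i(t) := a_i^{-1}\!\int\! x\,\omega_{i,\eps}(x,t)\,\rmd x$ and the spread $R_i(t) := \sup\{|x - B_i(t)|: x\in\Lambda_{i,\eps}(t)\}$. Differentiating $B_i$ and exploiting the antisymmetry of the self-interaction (so that only the drift survives) gives
\[
|B_i(t) - z_i(t)| \;\le\; C t\, |\log\eps|^{1-\alpha} \,+\, C\!\int_0^t\! R_{\max}(s)\,\rmd s\,, \qquad R_{\max} := \max_i R_i.
\]
Differentiating $|\phi_t(x) - B_i(t)|^2$ for $x\in\Lambda_{i,\eps}(0)$ and invoking the vanishing of the radial self-field yields
\[
\frac{\rmd}{\rmd t} R_i(t) \;\le\; C R_i(t) \,+\, C |\log\eps|^{1-\alpha} \,+\, C |B_i(t) - z_i(t)|.
\]
Coupling the two bounds and applying Gronwall gives $R_i(t) \le C(\eps + |\log\eps|^{1-\alpha})\,\rme^{Ct}$ on $[0,T^\star]$. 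Imposing $R_i(T^\star) \ge c|\log\eps|^{-\beta}$ forces $\rme^{CT^\star} \ge c|\log\eps|^{\alpha-1-\beta}$, i.e.\ $T^\star \ge \zeta \log|\log\eps|$ for $\zeta > 0$ small; the extra margin in $\beta < (\alpha-1)/4$ accommodates losses incurred when repeatedly re-closing the two iterations over successive sub-intervals of unit length, so that constants do not deteriorate over the whole window.

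\textbf{Main obstacle.} The essential difficulty lies in the self-field $u_i^{\mathrm{self}}$: although its effective drift is tame ($O(|\log\eps|^{1-\alpha})$), its pointwise values on $\Lambda_{i,\eps}(t)$ contain $\log(1/\eps)$ contributions that are \emph{not} a priori radial around $B_i(t)$. Showing that all large terms assemble into a rigid translation of the blob, with the residual radial component negligible, is the delicate symmetrization step, borrowed from \cite{butcavmar}. A secondary obstacle is ensuring that the two estimates above close simultaneously: the bound on $R_i$ needs $|B_i - z_i|$ to be small, while $|B_i - z_i|$ depends on $R_{\max}$; iterating these inequalities without losing the quasi-exponential time horizon $\log|\log\eps|$ is precisely what the refined iterative scheme of \cite{butcavmar, CavMarJMP} is designed to handle in the regime $\alpha > 1$.
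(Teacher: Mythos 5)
There is a genuine gap, and it sits exactly where you wave your hands: the radial component of the self-induced field around the center of vorticity does \emph{not} vanish by symmetry, because $\omega_{i,\eps}(\cdot,t)$ is not radially symmetric about $B^i_\eps(t)$. Controlling this term is the heart of the proof. In the paper it is the quantity $Q_u'$ in Lemma \ref{variazionemax}: the contribution of the vorticity near the center is bounded by $C I^i_\eps(t)/\rho^3$ using the moment of inertia (after exploiting that the first moment about $B^i_\eps(t)$ vanishes), but the contribution of the vorticity in the outer region can only be bounded by $\sqrt{3M\eps^{-2}m^i_\eps(R_t-\rho,t)/\pi}$, where $m^i_\eps(R,t)$ is the vorticity mass outside the disk of radius $R$. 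Since $\|\omega_{i,\eps}\|_\infty\sim\eps^{-2}$, this term is small only if $m^i_\eps(R_t-\rho,t)\ll\eps^2$; the Chebyshev bound $m^i_\eps(R,t)\le I^i_\eps(t)/R^2$ gives only a power of $|\log\eps|^{-1}$, which is hopelessly insufficient. Your sketch contains no analogue of the iterated mass estimates (Lemmata \ref{lem:mt} and \ref{lem:mt2}), which are precisely what produce the super-polynomial decay $m^i_\eps(3|\log\eps|^{-k},t)\le|a_i|\eps^\ell$ and make the differential inequality for the spread $R_i(t)$ close. Without them, your inequality $\frac{\rmd}{\rmd t}R_i\le CR_i+\text{(small)}$ is unjustified.

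A second, structural problem: even granting that inequality, it cannot be established on the whole interval $[0,T_{\eps,\beta}]$ in one stroke, because the mass estimates are obtained by an iteration producing factors $(C t)^n$ with $n\sim|\log\eps|$ steps, hence they are valid only on time intervals of length $O(1)$ independent of $\eps$. The time horizon $\log|\log\eps|$ is therefore \emph{not} the output of a single Gronwall estimate run until $R_i$ hits the threshold $|\log\eps|^{-\beta}$; it is obtained in Step 3 of the proof by restarting the whole scheme $n_\eps=|\log\eps|^{k'}$ times, where the $j$-th restart only buys a time $\mc T_j\sim \tau/(20j)$ (the time for an outer particle to cross the last shell of width $\rho_k/2$, $\rho_k=|\log\eps|^{-k}$, once the support has already grown to radius $5j\rho_k$), and the harmonic sum $\sum_{j\le n_\eps}\mc T_j\sim\frac{\tau}{20}\log n_\eps$ gives the claim. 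Your ``successive sub-intervals of unit length'' is not what the argument delivers. The coupling of $|B^i_\eps-z_i|$ with $R_i$ that you describe is indeed present (Proposition \ref{lem:Ieps Beps} and Corollary \ref{cor:B-z}) but is the comparatively easy part; the filament-mass control and the harmonic-series restart are the essential missing ideas.
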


Concerning the assumption Eq.~\eqref{R_m}, we remark that $R_m>0$ is a well known property of the dynamics for vortices with intensities of the same sign. In the general case, the existence of a unique global solution to the Cauchy problem to Eq.~\eqref{pvmodel} is proved for any choice of initial data and intensities  $\{z_i, a_i\}$, $i=1,\dots, N$, outside a set of Lebesgue measure zero \cite{mpulv}. This fact does not implies $R_m>0$, but it makes this assumption very reasonable.

\section{Proof of Theorem \ref{teoEA}}
\label{sec:3}

The proof of Theorem \ref{teoEA} is rather technical and composed of many auxiliary lemmata and propositions. The general strategy can be summarized as follows. We analyze the motion of a tagged vortex (ring) with index $i$ during the time interval $[0,T_{\eps,\beta}]$ under the influence of the others. For any $\eps$ small enough, the field generated by the remaining $N-1$ vortices has the features of a given external bounded field, since the minimum distance between any two distinct vortices remains greater than a positive constant, see Eq.~\eqref{Rm3}. Then, by making use of a basic estimate on the growth in time of the moment of inertia of the tagged vortex, we estimate the vorticity mass far from the center of vorticity, showing that it is negligible when $\eps$ is small (to this purpose, we adopt the aforementioned iterative method). This in turn  implies a control on the maximal diameter of the support of the tagged vortex, which allows to derive the desired lower bound on $T_{\eps, \beta}$. Some preliminary results are quite straightforward adaptations to the present case of the ones in previous papers \cite{CavMarJMP, butmar,butmar2,CapMar,CS,ISG,marc}. Therefore, for some of these results, we shall only sketch the proofs.

Hereafter, we assume that the initial distribution $\omega_\eps(x,0)$ satisfies Eqs.~\eqref{in_data}-\eqref{mass}. In particular, $\omega_\eps(x,t)$ can be decomposed as in Eq.~\eqref{t_data} with $\omega_{i,\eps}(x,t)$ as in Eq.~\eqref{term i EA}.

Making use of Eqs.~\eqref{1.8}-\eqref{1.9}, written with respect to the new coordinate system Eq.~\eqref{coordinate}, the velocity field reads,
\[
u(x,t) = \int\!\rmd y\,  G(x,y) \, \omega_\eps(y, t)\,,
\]
where $G(x,y)$ denotes the integral kernel appearing in Eqs.~\eqref{1.8}-\eqref{1.9} expressed in the new coordinates Eq.~\eqref{coordinate}. Furthermore, for each index $i$, it can be decomposed in the form
\begin{equation}
\label{u=ui+F}
u(x,t) =u^i(x,t) + F_\eps^i(x,t)\,,
\end{equation}
where
\begin{equation}
\label{ui}
u^i(x,t) = \int\! \rmd y\, G(x,y)\,\omega_{i,\eps}(y,t)
\end{equation}
is the velocity field generated by the $i$-th vortex, and 
\begin{equation}
\label{effe_iep}
F_\eps^i(x,t) = \sum\limits_{j\neq i}\int\! \rmd y\,G(x,y)\,\omega_{j,\eps}(y,t)
\end{equation}
is the one generated by the remaining $N-1$ vortices.

We start with the analysis of the kernel $G(x,y)=(G_1(x,y),G_2(x,y))$, whose explicit expression is
\begin{equation}
\label{G1}
G_1(x,y) =  \frac{1}{2\pi} \int_0^\pi\! \rmd\theta\, \frac{ (r_0+y_2) \big[(r_0+y_2) - (r_0+x_2) \cos \theta \, \big]}{\left\{ |x-y|^2 +2(r_0+x_2)(r_0+y_2)(1-\cos \theta) \right\}^{3/2}}\,,
\end{equation}
\begin{equation}
\label{G2}
G_2(x,y) =  \frac{1}{2\pi} \int_0^\pi\! \rmd\theta\, \frac{(r_0+ y_2)(x_1-y_1) \cos \theta}{ \left\{|x-y|^2 +2(r_0+x_2)(r_0+y_2)(1-\cos \theta) \right\}^{3/2}}
\end{equation}
(which depends on $\eps$ throughout the parameter $r_0=|\log\eps|^\alpha$).

\begin{proposition}
\label{stimaD}
Consider $x,y$ such that
\begin{equation}
\label{x2y2}
|x_2| \le \frac{r_0}{2}\,, \qquad |y_2| \le \frac{r_0}{2}\,,
\end{equation}
and let $r_0 = |\log\eps|^{\alpha}$. Then, for any $\eps$ small enough,
\begin{equation}
\label{prop3.2}
|G(x,y)- K(x-y)| \le \frac{C}{|\log\eps|^\alpha} \left( 1 + \log|\log\eps| + \chi_{(0,1)}(|x-y|) \log |x-y|^{-1}\right),
\end{equation}
where $\chi_{(0,1)}(\cdot)$ denotes the characteristic function of the interval $(0,1)$.
\end{proposition}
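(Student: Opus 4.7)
My approach would be to treat the axisymmetric Biot--Savart kernel $G$ as a small perturbation of $K$, exploiting the fact that for $|x-y|\ll r_0$ a short arc of a ring of radius $\sim r_0$ locally resembles a straight vortex filament. Setting $a:=r_0+x_2$ and $b:=r_0+y_2$, both in $[r_0/2,3r_0/2]$ under the hypothesis Eq.~\eqref{x2y2}, the first step is to algebraically decompose the numerators so as to isolate the 2D-like piece: in $G_1$, I would write $b-a\cos\theta=(y_2-x_2)+a(1-\cos\theta)$, and in $G_2$, $\cos\theta=1-(1-\cos\theta)$. This produces in each case a term proportional to $(y_2-x_2)$ or $(x_1-y_1)$ that will reconstruct the corresponding component of $K(x-y)$, plus a curvature correction weighted by $(1-\cos\theta)$.

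Next I would split the $\theta$-integration as $\int_0^\pi=\int_0^1+\int_1^\pi$. On $[1,\pi]$, since $1-\cos\theta\ge 1-\cos 1$, the denominator is bounded below by $c\, r_0^3$ while the numerators are $O(r_0^2)$; each such contribution is therefore $O(1/r_0)=O(|\log\eps|^{-\alpha})$. On $[0,1]$, I would use $1-\cos\theta=\theta^2/2+O(\theta^4)$ to replace the trigonometric expressions by their quadratic truncations in numerator and denominator, and then perform the rescaling $\theta=|x-y|\,t/\sqrt{ab}$. The leading integrals reduce to two universal ones: as $T:=\sqrt{ab}/|x-y|\to\infty$,
\[
\int_0^T\!\frac{dt}{(1+t^2)^{3/2}}\longrightarrow 1 \qquad \text{and} \qquad \int_0^T\!\frac{t^2\,dt}{(1+t^2)^{3/2}}\sim \log T.
\]
The first integral, together with the prefactor, reconstructs $K(x-y)$ up to a multiplicative ratio $\sqrt{b/a}=1+O(1/r_0)$; this, multiplied by $|K(x-y)|\le C/|x-y|$ (since $|y_2-x_2|,|x_1-y_1|\le|x-y|$), yields an $O(1/r_0)$ error. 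The second integral produces a curvature correction of order $\log T/\sqrt{ab}=\log(r_0/|x-y|)/r_0$, which splits into the harmless $\log|\log\eps|/|\log\eps|^\alpha$ when $|x-y|\ge 1$ and into $\chi_{(0,1)}(|x-y|)\log|x-y|^{-1}/|\log\eps|^\alpha$ when $|x-y|<1$, reproducing exactly the two nontrivial terms on the right-hand side of Eq.~\eqref{prop3.2}.

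The delicate step, which I expect to be the main obstacle, is bookkeeping the remainders coming from the Taylor expansion of $\cos\theta$ uniformly in $|x-y|$. These produce integrals of the form $\int_0^1\theta^k\,d\theta/[|x-y|^2+ab\theta^2]^{3/2-j}$ whose magnitude depends on how $|x-y|$ compares to the natural scale $\theta_c:=|x-y|/\sqrt{ab}$; I would bound them by splitting the $\theta$-integration again at $\theta_c$ and checking that in both regimes ($\theta\le\theta_c$ or $\theta>\theta_c$) the resulting contributions are controlled by $C/r_0$, at worst $C\log r_0/r_0$. Summing these with the main terms computed above gives Eq.~\eqref{prop3.2} for $G_1$; the treatment of $G_2$ is strictly analogous, with $K_2(x-y)=(x_1-y_1)/(2\pi|x-y|^2)$ playing the role of $K_1$.
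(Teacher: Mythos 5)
Your argument is correct, but note that the paper itself does not carry out this computation: it simply imports the bounds $|D_1|\le \frac{C}{r_0}\big(1+\log r_0 + \chi_{(0,1)}(|x-y|)\log|x-y|^{-1}\big)$ and $|D_2|\le C/r_0$ from \cite[Proposition~3.2]{CS}, which are proved there for a generic $r_0$, and then substitutes $r_0=|\log\eps|^\alpha$. What you have written is essentially a self-contained reconstruction of that reference's proof: the decomposition of the numerators in Eqs.~\eqref{G1}--\eqref{G2} isolating the planar kernel, the splitting of the $\theta$-integral at a fixed angle, the rescaling $\theta=|x-y|t/\sqrt{ab}$ producing the two model integrals, and the identification of the $\log\big(\sqrt{ab}/|x-y|\big)$ curvature correction are exactly the right ingredients, and your error bookkeeping is consistent with the stated bounds. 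Two small points deserve care in a written-out version. First, $\sqrt{b/a}-1$ is $O(|y_2-x_2|/r_0)$, not $O(1/r_0)$ (the difference $y_2-x_2$ may itself be of order $r_0$); the $O(1/r_0)$ error only emerges after pairing this with $|K(x-y)|\le C/|x-y|$ and $|y_2-x_2|\le|x-y|$, as you implicitly do. Second, in $G_2$ the factor $|x_1-y_1|$ is not a priori $O(r_0)$, so on $[1,\pi]$ the numerator is not $O(r_0^2)$ and one must use the $|x-y|^2$ in the denominator to absorb it when $|x-y|>r_0$; moreover, in the curvature term of $G_2$ the prefactor $|x_1-y_1|/r_0^2$ kills the logarithm, which is why the second component satisfies the cleaner bound $C/r_0$ and the $\log|x-y|^{-1}$ singularity genuinely appears only in the first component, in agreement with the bounds quoted from \cite{CS}.
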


\begin{proof}
This is the analogous of \cite[Proposition 3.2]{CS}, where the difference with our case is the choice $r_0=\eps^{-\alpha}$ instead of $r_0 = |\log\eps|^{\alpha}$. The proof given in \cite{CS} applies here straightforwardly, since $|G-K|$ is estimated in terms of $r_0$ (inserting $r_0 = \eps^{-\alpha}$ only at the end). More precisely, letting $D = G-K$, it is shown that, under the assumption Eq.~\eqref{x2y2},
\[
|D_1(x,y)| \le \frac{C}{r_0} \left(1 + \log r_0 + \chi_{(0,1)}(|x-y|) \log |x-y|^{-1} \right), \quad |D_2(x,y)| \le \frac{C}{r_0}\,.
\]
As $|G-K| \le |D_1|+|D_2|$, Eq.~\eqref{prop3.2} now follows from the above estimates with $r_0=|\log\eps|^\alpha$.
\end{proof}

We now introduce a parameter $\eta>0$ (to be chosen conveniently later on), define
\begin{equation}
\label{Tebe}
T_{\eps,\beta,\eta} = T_{\eps,\beta} \wedge \eta\log|\log\eps|\,,
\end{equation} 
and analyze the dynamics in the time interval $[0,T_{\eps,\beta,\eta}]$ when $\eps$ is small. The goal is to obtain a sharp localization property of the vorticities, which implies $T_{\eps,\beta,\eta} = \eta\log|\log\eps|$ for a suitable choice of $\eta$ and any $\eps$ small enough. This will prove the claim of Theorem \ref{teoEA} with $\zeta$ equal to this choice of $\eta$.

\begin{lemma} 
\label{norme di omega}
For any $\eps$ small enough,
\begin{equation} 
\label{norme2}
|\omega_{i,\eps}(x, t)| \le 3 M \eps^{-2} \quad \forall\,t \in [0,T_{\eps,\beta,\eta}] \quad \forall\, i=1,\ldots, N\,,
\end{equation}
with $M$ as in Eq.~\eqref{mass}.
\end{lemma}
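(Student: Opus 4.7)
The idea is to read off the bound directly from the representation \eqref{term i EA}. For $x \notin \Lambda_{i,\eps}(t)$ the function vanishes, so the content of the lemma is a bound on the Jacobian-like factor $(r_0+x_2)/(r_0+\phi_{-t,2}(x))$ for $x \in \Lambda_{i,\eps}(t)$ and $t\in[0,T_{\eps,\beta,\eta}]$. Once this factor is shown to be at most $3$, the pointwise estimate $|\omega_{i,\eps}(\phi_{-t}(x),0)|\le M\eps^{-2}$ from \eqref{mass} gives the claim.

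The key step is therefore an a priori bound on the vertical coordinates $x_2$ and $\phi_{-t,2}(x)$. On the one hand, $\phi_{-t}(x)\in\Lambda_{i,\eps}(0)\subseteq\Sigma(z_i|\eps)$, so $|\phi_{-t,2}(x)|\le|z_{i,2}|+\eps$ is uniformly bounded in $\eps$ and $t$. On the other hand, for $t\in[0,T_{\eps,\beta,\eta}]$ the very definition of $T_{\eps,\beta}$ in \eqref{T_eps_beta} gives $|x-z_i(t)|\le|\log\eps|^{-\beta}$, so $|x_2|\le|z_{i,2}(t)|+|\log\eps|^{-\beta}$. It remains to control $|z_{i,2}(t)|$: since the point-vortex trajectories satisfy \eqref{pvmodel} with $|z_i(t)-z_j(t)|\ge R_m>0$ by \eqref{R_m}, the kernel $K$ stays bounded along the flow, and thus $|\dot z_i(t)|\le C$ uniformly. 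Hence $|z_i(t)|\le|z_i(0)|+Ct\le C(1+\log|\log\eps|)$ on the time interval $[0,\eta\log|\log\eps|]\supseteq[0,T_{\eps,\beta,\eta}]$.

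Putting everything together, on the relevant set one has
\[
|x_2|\le C\log|\log\eps|,\qquad |\phi_{-t,2}(x)|\le C,
\]
while $r_0=|\log\eps|^\alpha$ with $\alpha>1$. Therefore $|x_2|/r_0$ and $|\phi_{-t,2}(x)|/r_0$ both tend to $0$ as $\eps\to 0$, and in particular both are smaller than $1/2$ for $\eps$ sufficiently small, giving
\[
\frac{r_0+x_2}{r_0+\phi_{-t,2}(x)}\le\frac{(3/2)\,r_0}{(1/2)\,r_0}=3.
\]
Combining with $|\omega_{i,\eps}(\phi_{-t}(x),0)|\le M\eps^{-2}$ from \eqref{mass} yields \eqref{norme2}.

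\textbf{Main obstacle.} There is no genuine obstacle here; the lemma is essentially a bookkeeping estimate consequent to \eqref{term i EA}. The only mildly delicate point is noticing that the trajectories $z_i(t)$ may themselves drift over the long time horizon $\eta\log|\log\eps|$, but the bound $|\dot z_i|\le C$ coming from $R_m>0$ makes this drift only logarithmic in $|\log\eps|$, which is absorbed with huge room by $r_0=|\log\eps|^\alpha$ thanks to $\alpha>1$ (indeed any $\alpha>0$ would suffice for this particular lemma).
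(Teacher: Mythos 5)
Your proof is correct and follows essentially the same route as the paper: both arguments bound $|z_i(t)|$ linearly in $t$ via \eqref{pvmodel} and \eqref{R_m}, use the definition of $T_{\eps,\beta}$ to control $|x_2|$, and the inclusion $\phi_{-t}(x)\in\Sigma(z_i|\eps)$ to control the denominator, concluding that the ratio in \eqref{term i EA} is at most $(3/2)r_0/((1/2)r_0)=3$. Your closing observation that any $\alpha>0$ would suffice here is accurate.
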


\begin{proof}
In view of definitions Eqs.~\eqref{T_eps_beta} and \eqref{Tebe}, for any $x\in \Lambda_{i,\eps}(t)$ and $t \in [0,T_{\eps,\beta,\eta}]$ we have $x\in \Sigma(z_i(t) |\, |\log\eps|^{-\beta})$, whence
\[
|r_0| - |z_i(t)| - |\log\eps|^{-\beta} \le |r_0+x_2| \le |r_0| + |z_i(t)| + |\log\eps|^{-\beta}\,.
\]
Moreover, by Eqs.~\eqref{pvmodel} and \eqref{R_m},
\[
|z_i(t)| \le |z_i|+ \frac{1}{2\pi}\int_0^t\!\rmd s\, \sum_{j \neq i} \frac{|a_j|}{|z_i(s)- z_j(s)|} \le |z_i|+ \frac{t}{2 \pi R_m} \sum_{j \neq i} |a_j|\,.
\]
Therefore, as $T_{\eps,\beta,\eta} \le \eta\log|\log\eps|$,
\begin{equation}
\label{stimx2}
\frac{r_0}2 \le |r_0+ x_2| \le \frac 32 r_0 \quad \forall\, x\in \Lambda_{i,\eps}(t) \quad \forall\, t \in [0,T_{\eps,\beta,\eta}]\,,
\end{equation}
for any $\eps$ small enough. Inserting the above bounds in Eq.~\eqref{term i EA} we get $|\omega_{i,\eps}(x,t)| \le 3 |\omega_{i,\eps}(x_0,0)|$ for any $x\in \Lambda_{i, \eps}(t)$ with $x_0=\phi_{-t}(x)\in \Lambda_{i, \eps}(0)$, and the lemma follows in view of Eq.~\eqref{mass}.
\end{proof}

We denote by
\begin{equation}
\label{utilde}
\widetilde{u}^i(x, t) = \int\!\rmd y\, K(x-y) \, \omega_{i,\eps}(y, t)\,,
\end{equation}
with $K$ defined in Eq.~\eqref{nucleoK}, the vector field generated by the $i$-th vortex in the planar case (say, $r_0=\infty$).

\begin{proposition} 
\label{u e utilde}
For any $\eps$ small enough,
\begin{equation} 
|u^i(x,t)-\widetilde{u}^i(x, t)| \le \frac{C}{|\log\eps|^{\alpha-1}} \quad \forall\,x \in \Sigma(z_i(t) |\, |\log\eps|^{-\beta}) \quad \forall\,t \in [0,T_{\eps,\beta,\eta}]\,.
\label{u e utilde 2}
\end{equation}
for any $i=1,\ldots, N$.
\end{proposition}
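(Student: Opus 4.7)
The plan is to express the difference as
\begin{equation*}
u^i(x,t) - \widetilde{u}^i(x,t) = \int\!\rmd y\, [G(x,y) - K(x-y)]\, \omega_{i,\eps}(y,t),
\end{equation*}
then to invoke the pointwise estimate of Proposition~\ref{stimaD} and integrate. First I need to verify the hypothesis Eq.~\eqref{x2y2}. For $x\in \Sigma(z_i(t)|\,|\log\eps|^{-\beta})$ and $y\in \Lambda_{i,\eps}(t) \subset \Sigma(z_i(t)|\,|\log\eps|^{-\beta})$, the bound $|z_i(t)| \le |z_i| + Ct$ obtained from Eq.~\eqref{pvmodel} and Eq.~\eqref{R_m}, combined with $t \le T_{\eps,\beta,\eta} \le \eta\log|\log\eps|$, gives $|x_2|, |y_2| \le r_0/2$ for all $\eps$ sufficiently small, since $r_0 = |\log\eps|^\alpha$ with $\alpha>1$ (this is exactly the argument that yields Eq.~\eqref{stimx2} in the proof of Lemma~\ref{norme di omega}).

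Next I plug Eq.~\eqref{prop3.2} into the integrand, so that $|G(x,y)-K(x-y)| |\omega_{i,\eps}(y,t)|$ splits into a bounded and a singular contribution. The bounded contribution is handled directly by the $L^1$ conservation Eq.~\eqref{norme} (applied to $|\omega_{i,\eps}(\cdot,t)|$, which is legitimate since this function has constant sign):
\begin{equation*}
\frac{C(1 + \log|\log\eps|)}{|\log\eps|^\alpha}\int\!\rmd y\,|\omega_{i,\eps}(y,t)| \le \frac{C(1+\log|\log\eps|)|a_i|}{|\log\eps|^\alpha},
\end{equation*}
which is $o(|\log\eps|^{-(\alpha-1)})$ and hence absorbed in the claimed bound.

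The main obstacle, though standard, is the singular piece $\chi_{(0,1)}(|x-y|) \log|x-y|^{-1}$. I would split the $y$-integration at radius $\eps$ around $x$. On $\Sigma(x|\eps)$ I use the $L^\infty$ bound $|\omega_{i,\eps}|\le 3M\eps^{-2}$ from Lemma~\ref{norme di omega}, together with the elementary computation $\int_{\Sigma(x|\eps)}\!\rmd y\,\log|x-y|^{-1} = O(\eps^2|\log\eps|)$, which yields a contribution of order $|\log\eps|$. On the complement, $\Lambda_{i,\eps}(t)$ is contained in the annulus $\{|x-y|\in[\eps,2|\log\eps|^{-\beta}]\}\subset \Sigma(x|1)$ for $\eps$ small, so I bound $\log|x-y|^{-1}\le|\log\eps|$ and apply Eq.~\eqref{norme}, obtaining once again a contribution of order $|\log\eps|$. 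Dividing the total by $|\log\eps|^\alpha$ gives the stated bound $C/|\log\eps|^{\alpha-1}$.
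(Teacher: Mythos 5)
Your proof is correct and follows essentially the same route as the paper: the same decomposition of $u^i-\widetilde u^i$ through $G-K$, the same verification of the hypothesis of Proposition~\ref{stimaD} via the argument behind Eq.~\eqref{stimx2}, and the same splitting of the resulting bound into a bounded part (controlled by $|a_i|$) and a logarithmically singular part. The only, immaterial, difference is in that singular term: the paper bounds $\int_{|y-x|<1}\log|x-y|^{-1}|\omega_{i,\eps}(y,t)|\,\rmd y$ by a symmetric rearrangement of the vorticity onto a disk of radius $O(\eps)$, whereas you split the domain at radius $\eps$ and use the $L^\infty$ bound of Lemma~\ref{norme di omega} inside and the $L^1$ bound together with $\log|x-y|^{-1}\le|\log\eps|$ outside; both yield the same $C(1+|\log\eps|)$ and hence the claimed $C|\log\eps|^{-(\alpha-1)}$.
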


\begin{proof}
By the same reasoning leading to Eq.~\eqref{stimx2}, if $t \in [0,T_{\eps,\beta,\eta}]$ then $|x_2| \le r_0/2$ and $|y_2| \le r_0/2$ for any  $x\in \Sigma(z_i(t) |\, |\log\eps|^{-\beta})$ and $y\in \bigcup_j \Lambda_{j,\eps}(t)$. Therefore, we can apply Proposition \ref{stimaD} to bound $|u^i(x,t)-\widetilde{u}^i(x, t)|$. In particular, using the decomposition Eq.~\eqref{t_data}, if $x\in \Sigma(z_i(t) |\, |\log\eps|^{-\beta})$ (for any given $i=1,\ldots,N$),
\begin{align}
\label{uu}
|u^i(x,t)-\widetilde{u}^i(x, t)| & \le \frac{C( 1 + \log|\log\eps|)}{|\log\eps|^\alpha}\,  |a_i|  \nonumber \\ & \quad + \frac{C}{|\log\eps|^\alpha} \int_{|y-x| <1}\!\rmd y\,\log |x-y|^{-1}|\omega_{i,\eps}(y, t)| \,,
\end{align}
where we used that, since each $\omega_{i,\eps}$  is a non-negative or non-positive function, Eq.~\eqref{norme} implies also
\begin{equation}
\label{norme*}
\int\!\rmd x\, |\omega_{i, \eps}(x,t)| = |a_i| \,, \quad \forall\, t\ge 0 \quad \forall\, i=1,\ldots,N\,.
\end{equation}
The last (and worst) term can be estimated performing a symmetrical rearrangement of the vorticity around the point $x$. From Eqs.~\eqref{norme2} and \eqref{norme*} we get an upper bound by replacing $\omega_{i, \eps}$ with the function equal to the constant $3 M\eps^{-2}$ in the disk of center $x$ and radius $r$ and equal to zero outside this disk, where the radius $r$ is chosen such that the total mass of vorticity is $|a_i|$, i.e., $3 M\eps^{-2}\pi r^2  =|a_i|$. Therefore,
\begin{gather*}
\int_{|y-x| <1}\!\rmd y\,\log |x-y|^{-1}|\omega_{i,\eps}(y, t)| \le 3 M \eps^{-2} \int_{|z| \le r}\! \rmd y \, \log |z|^{-1} \\ =  - 3 \pi M \eps^{-2} r^2 \log r + \frac 32 \pi M \eps^{-2} r^2 \le C (1+|\log\eps|) \,.
\end{gather*}
Inserting this bound in Eq.~\eqref{uu}, the proposition follows.
\end{proof}

\begin{lemma} 
\label{comesonoFeps}
The field $F^i_\eps$ defined in Eq.~\eqref{effe_iep} can be decomposed as
\[
F^i_\eps(x,t)= F^i_{\eps, 1}(x,t) + F^i_{\eps, 2}(x,t)\,,
\]
where 

(i) $F^i_{\eps, 1}(x,t)$ is Lipschitz and uniformly bounded. More precisely, there exists a positive constant $C_F$ such that
\begin{gather*}
|F^i_{\eps, 1}(x,t)-F^i_{\eps, 1}(x',t)| \le C_F|x-x'|\quad \forall\,x, x' \in \Sigma(z_i(t) |\, |\log\eps|^{-\beta})\,, \\ |F^i_{\eps, 1}(x,t)| \le C_F \quad \forall\,x  \in \Sigma(z_i(t) |\, |\log\eps|^{-\beta})\,,
\end{gather*}
for any $t \in [0, T_{\eps,\beta,\eta}]$ and $\eps$ small enough.

(ii) $F^i_{\eps, 2}(x,t)$ is small. More precisely, there exists a positive constant $C_F'$ such that
\[ 
|F^i_{\eps, 2}(x,t)| \le \frac{C_F'\log|\log\eps|}{|\log \eps|^{\alpha}} \quad \forall\,x \in \Sigma(z_i(t) |\, |\log\eps|^{-\beta})\,,
\]
for any $t \in [0, T_{\eps,\beta,\eta}]$ and $\eps$ small enough.
\end{lemma}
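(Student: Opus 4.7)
The plan is to perform the natural decomposition of $F^i_\eps$ induced by Proposition \ref{stimaD}, namely
\[
F^i_{\eps,1}(x,t) := \sum_{j\neq i}\int\!\rmd y\, K(x-y)\,\omega_{j,\eps}(y,t)\,,\qquad F^i_{\eps,2}(x,t) := \sum_{j\neq i}\int\!\rmd y\, [G(x,y)-K(x-y)]\,\omega_{j,\eps}(y,t)\,,
\]
so that $F^i_\eps = F^i_{\eps,1}+F^i_{\eps,2}$. The whole analysis takes place on the time interval $[0,T_{\eps,\beta,\eta}]$ and for $x\in\Sigma(z_i(t)|\,|\log\eps|^{-\beta})$, and rests on the key geometric fact that any $y\in\Lambda_{j,\eps}(t)$ with $j\neq i$ lies at distance at least $R_m/2$ from $x$ (by Eq.~\eqref{Rm/2}, since $\Lambda_{j,\eps}(t)\subset\Sigma(z_j(t)|\,|\log\eps|^{-\beta})$).

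First I would handle $F^i_{\eps,1}$. Since the integration is restricted to $|x-y|\ge R_m/2$, both $K(x-y)=-\frac{1}{2\pi}\nabla^\perp\log|x-y|$ and its gradient are uniformly bounded on the relevant domain by constants depending only on $R_m$. Combining this with the mass bound $\int |\omega_{j,\eps}(y,t)|\,\rmd y = |a_j|$ from Eq.~\eqref{norme*}, one gets $|F^i_{\eps,1}|\le C(R_m)\sum_{j\neq i}|a_j|$ and, by differentiating under the integral, the Lipschitz bound $|F^i_{\eps,1}(x,t)-F^i_{\eps,1}(x',t)|\le C(R_m^2)\sum_{j\neq i}|a_j|\,|x-x'|$, giving the required constant $C_F$ which is independent of $\eps$ and $t$.

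Next I would handle $F^i_{\eps,2}$ by applying Proposition \ref{stimaD}. To do so I must first verify the hypothesis Eq.~\eqref{x2y2} both for $y\in\Lambda_{j,\eps}(t)$ (this is exactly Eq.~\eqref{stimx2} proved in Lemma \ref{norme di omega}) and for $x\in\Sigma(z_i(t)|\,|\log\eps|^{-\beta})$, which follows by the same argument: $|z_i(t)|$ grows at most linearly in $t$ via Eq.~\eqref{pvmodel} and \eqref{R_m}, and $t\le\eta\log|\log\eps|=o(r_0)$. Once this is in place, since $|x-y|\ge R_m/2$ we have, for $\eps$ small, $\chi_{(0,1)}(|x-y|)\log|x-y|^{-1}\le \log(2/R_m)^+\le C$, so Proposition \ref{stimaD} yields
\[
|G(x,y)-K(x-y)|\le \frac{C(1+\log|\log\eps|)}{|\log\eps|^\alpha}\,.
\]
Integrating against $|\omega_{j,\eps}|$ and summing over $j\neq i$, and using once more Eq.~\eqref{norme*}, gives $|F^i_{\eps,2}(x,t)|\le C'_F\log|\log\eps|/|\log\eps|^\alpha$, as claimed.

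The argument is essentially routine modulo two minor points that I expect to be the only things requiring care: (a) checking that $x\in\Sigma(z_i(t)|\,|\log\eps|^{-\beta})$ together with $t\le \eta\log|\log\eps|$ is enough to guarantee $|x_2|\le r_0/2$, which is where the choice of $T_{\eps,\beta,\eta}$ as the minimum with $\eta\log|\log\eps|$ comes into play; and (b) tracking that the constants $C_F, C'_F$ depend only on $R_m$, the intensities $\{a_j\}$, and the absolute constants of Proposition \ref{stimaD}, and hence are independent of $\eps$ and $t$ as required.
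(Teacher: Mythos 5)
Your proposal is correct and follows exactly the paper's own proof: the same splitting of $F^i_\eps$ into the planar kernel part $K$ and the remainder $G-K$, the same use of Eq.~\eqref{Rm/2} to keep $|x-y|\ge R_m/2$ for the Lipschitz/boundedness of $F^i_{\eps,1}$ and to tame the logarithmic singularity in Proposition \ref{stimaD}, and the same verification of the hypothesis $|x_2|,|y_2|\le r_0/2$ via the argument of Lemma \ref{norme di omega}. No gaps.
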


\begin{proof}
We define
\begin{align*}
F^i_{\eps, 1}(x,t) = & \sum_{j \neq i} \int\! \rmd y \, K(x-y) \, \omega_{j, \eps}(y,t)\,, \\ F^i_{\eps, 2}(x,t) = & \sum_{j \neq i} \int\! \rmd y \, [G(x,y) - K(x-y)] \, \omega_{j, \eps}(y,t)\,.
\end{align*}
By the properties of $K$ and Eq.~\eqref{Rm/2}, it follows that $F^i_{\eps, 1}(x)$, $x\in\Sigma(z_i(t) |\, |\log\eps|^{-\beta})$, is Lipschitz and bounded, uniformly as $\eps\to 0$. Moreover, as already noticed in the proof of Proposition \ref{u e utilde}, if $t \in [0,T_{\eps,\beta,\eta}]$ then $|x_2| \le r_0/2$ and $|y_2| \le r_0/2$ for any  $x\in \Sigma(z_i(t) |\, |\log\eps|^{-\beta})$ and $y\in \bigcup_j \Lambda_{j,\eps}(t)$. Therefore, by Proposition \ref{stimaD} and Eq.~\eqref{norme*}, if $x \in \Sigma(z_i(t) |\, |\log\eps|^{-\beta})$,
\begin{align*}
|F^i_{\eps, 2}(x, t)| & \le \frac{C( 1 + \log|\log\eps|)}{|\log\eps|^\alpha} \sum_{j\ne i} |a_j| \\ & \quad + \frac{C}{|\log\eps|^\alpha} \sum_{j\ne i} \int_{|y-x| <1}\!\rmd y\,\log |x-y|^{-1}|\omega_{j,\eps}(y, t)| \\ & \le \frac{C( 1 + \log|\log\eps|)}{|\log\eps|^\alpha} \sum_{j\ne i} |a_j| + \frac{C}{|\log\eps|^\alpha} \sum_{j\ne i} |a_j| \log\Big(1+\frac 2{R_m}\Big)\,,
\end{align*}
where we used Eq.~\eqref{Rm/2}. The lemma is proved.
\end{proof}

We now recall the definitions of center of vorticity and the moment of inertia,
\begin{equation}
\label{cen_vort}
B^i_\eps(t) := \frac1{a_i} \int\! \rmd x\, x \, \omega_{i,\eps}(x,t)\,, \qquad
I^i_\eps(t) := \int\! \rmd x\, |x-B^i_\eps(t)|^2 |\omega_{i, \eps}(x,t)|\,.
\end{equation}
A basic tool in the analysis is an upper bound on the moment of inertia, which is the content of the following proposition.

\begin{proposition} 
\label{lem:Ieps Beps}
For any $\eps$ small enough and $i=1,\ldots,N$,
\begin{equation} 
\label{Iepsilon} 
I^i_\eps(t) \le C \frac{|\log\eps|^{2\eta C_F }}{|\log\eps|^{2(\alpha-1)}} \quad \forall\,t \in [0,T_{\eps,\beta,\eta}]\,.
\end{equation}
\end{proposition}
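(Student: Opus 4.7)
The plan is to derive a Gronwall-type differential inequality for $I^i_\eps(t)$ and then integrate it. I would first fix an index $i$ and assume without loss of generality that $a_i>0$, so $|\omega_{i,\eps}|=\omega_{i,\eps}$; the opposite sign case is identical after factoring out $\operatorname{sgn}(a_i)$. Applying the weak formulation Eq.~\eqref{1.12} to the time-dependent test function $f(x,t)=|x-B^i_\eps(t)|^2$ (with $B^i_\eps$ differentiable in $t$, having derivative $\dot B^i_\eps(t)=\frac{1}{a_i}\int\!\rmd x\,u\,\omega_{i,\eps}$ obtained by taking $f=x$ in Eq.~\eqref{1.12}) and using the first-moment cancellation $\int(x-B^i_\eps)\omega_{i,\eps}\,\rmd x=0$, the time derivative of $I^i_\eps$ reduces to
\[
\frac{\rmd}{\rmd t}I^i_\eps(t)=2\int\!\rmd x\,\omega_{i,\eps}(x,t)\,u(x,t)\cdot(x-B^i_\eps(t)).
\]

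The next step is to insert the splitting $u=\widetilde u^i+(u^i-\widetilde u^i)+F^i_{\eps,1}+F^i_{\eps,2}$ from Eq.~\eqref{u=ui+F} and Lemma~\ref{comesonoFeps}, and estimate each contribution. The self-part with $\widetilde u^i$ vanishes: substituting Eq.~\eqref{utilde} and symmetrizing in $x\leftrightarrow y$, the antisymmetry of $K$ forces the integrand to $\omega_{i,\eps}(x)\omega_{i,\eps}(y)\,K(x-y)\cdot(x-y)\equiv 0$. For $F^i_{\eps,1}$, the centroid $B^i_\eps(t)$ lies in the convex disk $\Sigma(z_i(t)|\,|\log\eps|^{-\beta})$ containing $\Lambda_{i,\eps}(t)$, so I would subtract $F^i_{\eps,1}(B^i_\eps(t))$ (whose contribution vanishes by the first-moment identity) and invoke the Lipschitz bound of Lemma~\ref{comesonoFeps}(i) to control the remainder by $C_F\,I^i_\eps(t)$. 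For the two error terms $u^i-\widetilde u^i$ and $F^i_{\eps,2}$, their uniform bounds from Proposition~\ref{u e utilde} and Lemma~\ref{comesonoFeps}(ii), combined with Cauchy--Schwarz against $\int|\omega_{i,\eps}|\,\rmd x=|a_i|$, yield a contribution dominated by $C|\log\eps|^{-(\alpha-1)}\sqrt{I^i_\eps(t)}$ (the $F^i_{\eps,2}$ term being of smaller order).

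Collecting,
\[
\frac{\rmd}{\rmd t}I^i_\eps(t)\le 2C_F\,I^i_\eps(t)+\frac{C}{|\log\eps|^{\alpha-1}}\sqrt{I^i_\eps(t)}.
\]
Setting $\psi(t)=\sqrt{I^i_\eps(t)}$ converts this to the linear inequality $\dot\psi\le C_F\psi+C'/|\log\eps|^{\alpha-1}$, which I would integrate to
\[
\psi(t)\le\psi(0)\rme^{C_F t}+\frac{C''\rme^{C_F t}}{|\log\eps|^{\alpha-1}}.
\]
Since $\Lambda_{i,\eps}(0)\subset\Sigma(z_i|\eps)$ forces $\psi(0)\le C\eps$, and $t\le T_{\eps,\beta,\eta}\le\eta\log|\log\eps|$ forces $\rme^{C_F t}\le|\log\eps|^{\eta C_F}$, squaring yields Eq.~\eqref{Iepsilon}. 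The main point to handle with care is verifying that all cancellations persist with $|\omega_{i,\eps}|$ in place of $\omega_{i,\eps}$ (the sign simply factors out) and that $B^i_\eps(t)$ stays in the disk where the Lipschitz estimate of Lemma~\ref{comesonoFeps}(i) is available; both follow from the sign-definiteness of $\omega_{i,\eps}$ and the convexity of $\Sigma(z_i(t)|\,|\log\eps|^{-\beta})$.
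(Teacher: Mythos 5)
Your proposal is correct and follows essentially the same route as the paper: the same reduction of $\dot I^i_\eps$ via the weak formulation and the first-moment cancellation, the same vanishing of the $\widetilde u^i$ self-interaction term (by antisymmetry of $K$ together with $K(z)\cdot z=0$), the same Lipschitz/smallness splitting of $F^i_\eps$, and the same Gronwall argument for $\sqrt{I^i_\eps}$ using $I^i_\eps(0)\le 4\eps^2$ and $t\le\eta\log|\log\eps|$. No gaps.
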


\begin{proof}
We need to estimate the derivative of $I^i_\eps(t)$. To this purpose, we observe that each vortex ring $\omega_{i,\eps}(x,t)$ is transported by the same field Eq.~\eqref{u=ui+F}, so a weak formulation like Eq.~\eqref{1.12} holds. More precisely, if $\omega_{i,\eps}[f,t] := \int\!\rmd x\, \omega_{i,\eps}(x,t) f(x,t)$ then
\begin{equation} 
\label{eqderivaterid} 
\frac{\rmd}{\rmd t}\omega_{i,\eps}[f,t] = \omega_{i,\eps}\big[ (u^i+F^i_\eps)\cdot \nabla f + \partial_t f, t\big]\,.
\end{equation}
Therefore, as $|\omega_{i,\eps}(x,t)| = \frac{a_i}{|a_i|} \omega_{i,\eps}(x,t)$,
\begin{align*}
\dot I^i_\eps(t) & = \frac{a_i}{|a_i|} \int\!\rmd x\, \omega_{i,\eps}(x, t) \, \left[ (u^i+F^i_\eps)\cdot 2(x-B^i_\eps(t)) - 2 \dot B^i_\eps(t) \cdot (x-B^i_\eps(t)) \right]\,, \\ & = 2\frac{a_i}{|a_i|} \int\!\rmd x\, \omega_{i,\eps}(x, t) \, \left[ (u^i+F^i_\eps)\cdot (x-B^i_\eps(t)) \right]\,,
\end{align*}
where in the last equality we used that $\int \rmd x\, \omega_{i,\eps}(x, t) (x-B^i_\eps(t)) = 0$ by the definition  of $B^i_\eps(t)$. For the same reason we have,
\begin{align*} 
& \left| \int\! \rmd x\, \omega_{i,\eps}(x,t) F^i_\eps(x,t) \cdot (x-B^i_\eps(t)) \right| \\ & =  \left| \int\! \rmd x\, \omega_{i,\eps}(x,t) \left[ F^i_{\eps, 1}(x,t)- F^i_{\eps, 1} (B^i_\eps(t),t) + F^i_{\eps, 2}(x, t) \right]  \cdot (x-B^i_\eps(t)) \right| \\ & \le C_F\int\! \rmd x\, |\omega_{i,\eps}(x,t)| \, |x-B^i_\eps(t)|^2 +  \frac{C_F'\log|\log\eps|}{|\log\eps|^{\alpha}} \int\! \rmd x\, \omega_{i,\eps}(x,t) |x-B^i_\eps(t)| \\ & \le C_F \, I^i_\eps(t) + \frac{C_F'\log|\log\eps|}{|\log\eps|^{\alpha}}I^i_\eps(t)^{1/2}\,,
\end{align*}
where we used the Cauchy-Schwarz inequality in the last line. Note that the Lipschitz property of $F^i_{\eps,1}$, stated in Lemma \ref{comesonoFeps}, can be applied here since, by convexity, $B^i_\eps(t) \in  \Sigma(z_i(t) |\, |\log\eps|^{-\beta})$ regardless of whether it belongs to $\Lambda_{i,\eps}(t)$ or not. For  the term containing $u^i$, we observe that, by the  antisymmetry of $K$,
\begin{equation}
\label{antisym}
\int\! \rmd x\, \omega_{i,\eps}(x,t) \, \widetilde{u}^i(x,t) = \int\!\rmd x \int\! \rmd y\, \omega_{i,\eps}(x,t)\,\omega_{i,\eps}(y, t) \, K(x-y) = 0\,,
\end{equation}
and, as $(x-y) \cdot K(x-y) =0$, we get
\begin{gather*} 
\int\! \rmd x \, \omega_{i,\eps}(x,t) \, x \cdot \widetilde{u}^i(x,t) = \int\! \rmd x \int\! \rmd y \, \omega_{i,\eps}(x,t)\,\omega_{i,\eps}(y, t) \, x \cdot K(x-y) \\ = \int\! \rmd x \int\! \rmd y \, \omega_{i,\eps}(x,t)\,\omega_{i,\eps}(y, t) \, y \cdot K(x-y)\,,
\end{gather*} 
hence this integral is zero as well, by the antisymmetry of $K$. Using Proposition \ref{u e utilde} we then get,
\begin{gather*}
\left| \int\! \rmd x\, \omega_{i,\eps}(x,t)\, u^i(x,t) \cdot (x-B^i_\eps(t)) \, \right| \le \int\! \rmd x\, |\omega_{i,\eps}(x,t)| \, \big|u^i(x,t)-\widetilde{u}^i(x,t)\big| \, |x-B^i_\eps(t)| \\ \le \frac{C}{|\log\eps|^{\alpha -1}}\int\! \rmd x \, |\omega_{i,\eps}(x,t)| \, |x-B^i_\eps(t)| \le \: \frac{C}{|\log\eps|^{\alpha -1}} \,  I^i_\eps(t)^{1/2}\,,
\end{gather*}
where the Cauchy-Schwarz inequality has been used again in the last line. Gathering the above estimates we conclude that
\[
\dot{I}_\eps(t) \le 2C_F\,I^i_\eps(t) + \frac{C}{|\log\eps|^{\alpha -1}} I^i_\eps(t)^{1/2}\,.
\]
Defining $M_\eps(t):=I^i_\eps(t)^{1/2}$, by Gronwall's inequality and using the fact  that, by Eq.~\eqref{in_data2}, $I^i_\eps(0) \le 4\eps^2$, we get
\[
M_\eps(t) \le \left( 2\eps + \frac{C}{2 C_F|\log\eps|^{\alpha -1}} \right) \rme^{C_F t}  \le  \frac{C}{|\log\eps|^{\alpha -1}} \, \rme ^{C_F t}\,.
\]
As $T_{\eps,\beta,\eta} \le \eta\log|\log\eps|$, Eq.~\eqref{Iepsilon} follows from the bound above.
\end{proof}

In particular, Proposition \ref{lem:Ieps Beps} proves that if $\eta$ is chosen sufficiently small then $I^i_\eps(t)\to 0$ as $\eps\to 0$ up to time $T_{\eps,\beta,\eta}$. In the next corollary, we show that a similar result is true for the distances $|B^i_\eps(t) -z_i(t)|$.

\begin{corollary}
\label{cor:B-z}
For any $\eps$ small enough and $i=1,\ldots,N$,
\begin{equation} 
\label{B-z} 
|B^i_{\eps}(t) - z_i(t)| \le C \frac{|\log\eps|^{\eta(C_F+D)}}{|\log \eps|^{\alpha-1}} \qquad \forall\, t\in [0,T_{\eps,\beta,\eta}]\,,
\end{equation}
where $D := 2L\max_j |a_j|$ with $L$ the Lipschitz constant of $K(x)$ when $|x| \ge R_m/2$ ($R_m$ as in Eq.~\eqref{R_m}).
\end{corollary}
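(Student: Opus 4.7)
The plan is to derive an approximate point-vortex ODE for $B^i_\eps(t)$ itself and then compare with the genuine point-vortex system via Gr\"onwall. Differentiating the definition Eq.~\eqref{cen_vort} and using the weak formulation Eq.~\eqref{eqderivaterid} applied to $f(x,t)=x_k$, one obtains
\[
\dot B^i_\eps(t) = \frac{1}{a_i}\int\!\rmd x\,\omega_{i,\eps}(x,t)\,\bigl[u^i(x,t) + F^i_\eps(x,t)\bigr].
\]
I would then split $u^i = \widetilde u^i + (u^i-\widetilde u^i)$ and $F^i_\eps = F^i_{\eps,1}+F^i_{\eps,2}$. The self-induced planar term drops out: by the antisymmetry of $K$ already exploited in Eq.~\eqref{antisym}, $\int\!\rmd x\,\omega_{i,\eps}\,\widetilde u^i = 0$. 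The curvature correction $u^i-\widetilde u^i$ contributes at most $C/|\log\eps|^{\alpha-1}$ by Proposition \ref{u e utilde}, and the small part $F^i_{\eps,2}$ contributes at most $C\log|\log\eps|/|\log\eps|^{\alpha}$ by Lemma \ref{comesonoFeps}(ii); both are easily absorbed in the final error.

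The main term $\frac{1}{a_i}\int\!\omega_{i,\eps}\,F^i_{\eps,1}\,\rmd x$ is handled in two steps. First, using the Lipschitz property of $F^i_{\eps,1}$ from Lemma \ref{comesonoFeps}(i), I would replace $F^i_{\eps,1}(x,t)$ by $F^i_{\eps,1}(B^i_\eps(t),t)$ at the cost of an error bounded by $\frac{C_F}{|a_i|}\!\int\!|\omega_{i,\eps}|\,|x-B^i_\eps|\,\rmd x\le C_F|a_i|^{-1/2}I^i_\eps(t)^{1/2}$ via Cauchy-Schwarz. The Lipschitz estimate legitimately applies because, by convexity, $B^i_\eps(t)\in\Sigma(z_i(t)|\,|\log\eps|^{-\beta})$, as already observed in the proof of Proposition \ref{lem:Ieps Beps}. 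Second, using the Lipschitz constant $L$ of $K$ on $\{|x|\ge R_m/2\}$ together with Eq.~\eqref{Rm/2}, each summand in $F^i_{\eps,1}(B^i_\eps(t),t)$ can be replaced by $a_j K(B^i_\eps(t)-B^j_\eps(t))$ up to an error at most $L|a_j|^{1/2}I^j_\eps(t)^{1/2}$. Proposition \ref{lem:Ieps Beps} then bounds both contributions by $C|\log\eps|^{\eta C_F}/|\log\eps|^{\alpha-1}$, so that
\[
\dot B^i_\eps(t) = \sum_{j\ne i}a_j K\bigl(B^i_\eps(t)-B^j_\eps(t)\bigr) + R^i(t),\qquad |R^i(t)|\le C\frac{|\log\eps|^{\eta C_F}}{|\log\eps|^{\alpha-1}}.
\]

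Finally, subtracting Eq.~\eqref{pvmodel} and invoking once more the Lipschitz constant $L$ of $K$ (valid since $|z_i-z_j|\ge R_m$ by Eq.~\eqref{R_m} and $|B^i_\eps-B^j_\eps|\ge R_m/2$ by the convexity argument above), I get the differential inequality
\[
\frac{\rmd}{\rmd t}|B^i_\eps(t)-z_i(t)| \le L\sum_{j\ne i}|a_j|\bigl[|B^i_\eps-z_i|+|B^j_\eps-z_j|\bigr] + |R^i(t)|.
\]
Taking the maximum over $i$ yields a linear differential inequality whose Gr\"onwall coefficient is precisely $D$; combined with the initial bound $|B^i_\eps(0)-z_i|\le\eps$ coming from Eq.~\eqref{in_data2} and the constraint $t\le\eta\log|\log\eps|$, this produces the extra factor $|\log\eps|^{\eta D}$ on top of the forcing $|\log\eps|^{\eta C_F}/|\log\eps|^{\alpha-1}$, giving exactly Eq.~\eqref{B-z}. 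The only nontrivial point is the bookkeeping that propagates the power $|\log\eps|^{\eta C_F}/|\log\eps|^{\alpha-1}$ from Proposition \ref{lem:Ieps Beps} uniformly through both Lipschitz substitutions; once this is verified, the Gr\"onwall step is essentially automatic.
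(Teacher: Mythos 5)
Your proposal is correct and follows essentially the same route as the paper: the same decomposition of $\dot B^i_\eps-\dot z_i$ into the curvature error (Proposition \ref{u e utilde} plus the antisymmetry cancellation \eqref{antisym}), the $F^i_{\eps,2}$ remainder, the two Lipschitz substitutions controlled by $I^i_\eps(t)^{1/2}$ via Proposition \ref{lem:Ieps Beps}, and a final Gr\"onwall comparison with the point-vortex system. The only cosmetic difference is that the paper runs Gr\"onwall on $\Delta(t)=\sum_i|B^i_\eps(t)-z_i(t)|^2$ rather than on $\max_i|B^i_\eps(t)-z_i(t)|$; both yield the stated bound (up to the precise constant multiplying $L\max_j|a_j|$ in the exponent, which is immaterial since $\eta$ is chosen afterwards).
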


\begin{proof}
From the definition of $B^i_\eps(t)$, see Eq.~\eqref{cen_vort}, Eq.~\eqref{eqderivaterid} (with $f=x$), and Eq.~\eqref{pvmodel},
\begin{align*}
\dot B^i_{\eps}(t) - \dot z_i(t)  = & \: a_i^{-1} \int\! \rmd x\, \big(u^i(x, t) + F^i_\eps(x,t) \big) \, \omega_{i, \eps}(x,t) - \sum_{j \ne i} a_j K(z_i(t) - z_j(t))\, .
\end{align*}
We add and subtract appropriate terms, then by the splitting of $ F^i_\eps$ given in Lemma \ref{comesonoFeps} we get,
\begin{align}
\label{dotB}
\dot B^i_{\eps}(t) - \dot z_i(t) & = a_i^{-1} \int\! \rmd x\, u^i(x,t) \omega_{i, \eps}(x, t) + a_i^{-1}  \int\! \rmd x \, F^i_{\eps,2} (x, t) \omega_{i, \eps}(x, t) \nonumber \\ & \quad + a_i^{-1} \int\! \rmd x\,  [F^i_{\eps,1}(x,t) - F^i_{\eps,1}(B^i_{\eps}(t), t) ]\, \omega_{i, \eps} (x, t) \nonumber \\
& \quad + \sum_{j \neq i} \int\! \rmd y\,  [K(B^i_{\eps}(t)-y)- K(B^i_{\eps}(t)- B^j_{\eps}(t)) ] \, \omega_{j, \eps} (y, t) \nonumber \\ & \quad + \sum_{j \neq i} a_j [K(B^i_{\eps}(t)- B^j_{\eps}(t)) -K(B^i_{\eps}(t) -z_j(t) ) ] \nonumber \\
& \quad + \sum_{j \neq i} a_j [ K(B^i_{\eps}(t) -z_j(t) ) - K(z_i(t) - z_j(t)) ] \, .
\end{align}
The first term in the right-hand side is controlled by adding and subtracting $\widetilde u^i(x,t)$ inside the integral, using Proposition \ref{u e utilde}, and Eq.~\eqref{antisym}. For the other terms, we use Lemma \ref{comesonoFeps} and the Lipschitz property of $K$ when the absolute value of its argument is greater than a positive constant. Therefore, in view of Eqs.~\eqref{R_m}, \eqref{T_eps_beta}, and \eqref{Rm/2}, denoting by $L$ the Lipschitz constant of $K(x)$ when $|x| \ge R_m/2$,
\begin{align*}
|\dot B^i_{\eps}(t) - \dot z_i(t)| & \le\frac{|a_i|^{-1}C}{|\log \eps|^{\alpha-1}}  + |a_i|^{-1/2} C_F \, I^i_{\eps}(t)^{1/2} + \sum_{j \neq i} L |a_j|^{1/2}  I^j_{\eps}(t)^{1/2} \\ & \quad  + \sum_{j \neq i} L |a_j| |B^j_{\eps}(t) -z_j(t) | 
+ \sum_{j \neq i} L |a_j|  |B^i_{\eps}(t) -z_i(t) |\,.
\end{align*}
Setting 
\[
\Delta(t) := \sum_i |B^i_{\eps}(t) -z_i(t) |^2 \,,
\]
we have
\[
\begin{split}
\dot \Delta(t) & \le 2 \sum_i |B^i_{\eps}(t) -z_i(t)|\, |\dot B^i_{\eps}(t) - \dot z_i(t)| \\ & \le 4L\max_j |a_j| \Delta(t) + C\Bigg[\frac{1}{|\log \eps|^{\alpha-1}} + \bigg(\sum_j I^j_{\eps}(t)\bigg)^{1/2} \Bigg] \sqrt{\Delta(t)} \,.
\end{split}
\]
By integration of the previous inequality  and recalling $D := 2L\max_j |a_j|$,
\[
\sqrt{\Delta(t)} \le  \sqrt{\Delta(0)} \rme^{D t} + C \int_0^t\!\rmd s\,  \bigg(\sum_j I^j_{\eps}(t)\bigg)^{1/2} \rme^{D(t-s)} + \frac{C }{|\log \eps|^{\alpha-1}}(\rme^{Dt} -1)\,,
\]
with $\sqrt{\Delta(0)} \le \eps$ in view of Eq.~\eqref{in_data2}. Therefore, by the bound Eq.~\eqref{Iepsilon} and since $T_{\eps,\beta,\eta} \le \eta\log|\log\eps|$ we deduce that, for any $\eps$ small enough,
\begin{equation} 
\label{Deltat} 
\sqrt{\Delta(t)} \le C \frac{|\log\eps|^{\eta(C_F+D)}}{|\log \eps|^{\alpha-1}} \quad \forall\, t\in [0,T_{\eps,\beta,\eta}]\,.
\end{equation}
so Eq.~\eqref{B-z} is achieved.
\end{proof}

In the next lemma, we state a bound on the maximum relative velocity, with respect to the center of vorticity, of fluid particles near the boundary of $\Lambda_{i,\eps}(t)$.

\begin{lemma} 
\label{variazionemax}
Let
\begin{equation} 
\label{max_lambda} 
R_t := \max \{ |x-B^i_\eps(t)| \colon  x \in \Lambda_{i,\eps}(t) \}\,.
\end{equation}
Given $x_0 \in \Lambda_{i,\eps}(0)$, suppose at time $t \in [0,T_{\eps,\beta,\eta}]$ it happens that, for some $\rho>0$,
\begin{equation}
\label{erret}
R_t \ge 4\rho\,, \quad  R_t - \frac{\rho}{2} \le |\phi_t(x_0)- B^i_\eps(t)| \le R_t \,,
\end{equation}
Then at this time $t$ the following inequality holds,
\begin{align} 
\label{eq:variazonemax} 
\frac{\rmd}{\rmd t} |\phi_t(x_0)- B^i_\eps(t)| & \le \frac{16}7 |a_i| C_F \, |\phi_t(x_0)- B^i_\eps(t)|  + \frac{C I^i_\eps(t)}{ \rho^3} + \frac{C}{|\log\eps|^{\alpha -1}}  \nonumber \\ & \quad + \sqrt{\frac{3\, M \eps^{-2} m^i_\eps(R_t-\rho, t)}{\pi} }\,,
\end{align} 
where  the function $m^i_\eps$ is defined by
\begin{equation}
\label{funzemme}
m^i_\eps(R, t) := \int_{|y-B^i_\eps(t)|>R}\! \rmd y\, |\omega_{i,\eps}(y,t)| \quad \text{ for } R \in (0, +\infty)\,.
\end{equation}
\end{lemma}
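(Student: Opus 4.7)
The plan is to start from
\[
\frac{\rmd}{\rmd t}|\phi_t(x_0) - B^i_\eps(t)| = \frac{\phi_t(x_0)-B^i_\eps(t)}{|\phi_t(x_0)-B^i_\eps(t)|}\cdot \bigl(\dot\phi_t(x_0) - \dot B^i_\eps(t)\bigr),
\]
which reduces the task to estimating the radial component of the relative velocity. Setting $x := \phi_t(x_0)$ and $B := B^i_\eps(t)$, and using $\dot\phi_t(x_0) = u^i(x,t) + F^i_\eps(x,t)$ together with $\dot B = a_i^{-1}\int (u^i + F^i_\eps)(y,t)\omega_{i,\eps}(y,t)\,\rmd y$ (which follows from Eq.~\eqref{eqderivaterid} with $f(x,t) = x$), the quantity to bound becomes $|x-B|^{-1}(x-B)\cdot\bigl[\Phi(x) - a_i^{-1}\int\Phi(y)\omega_{i,\eps}(y,t)\,\rmd y\bigr]$ with $\Phi = u^i + F^i_\eps$, naturally split into a self-induced and an external contribution.

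For the self-induced part, the first step is to replace $u^i$ by the planar velocity $\widetilde u^i$ of Eq.~\eqref{utilde}; by Proposition~\ref{u e utilde}, this error contributes $C/|\log\eps|^{\alpha-1}$ after dividing by $|x-B|$ (keeping in mind that $|x-B|$ is also bounded above in view of $|\log\eps|^{-\beta}$-closeness). By Eq.~\eqref{antisym} the averaged term $a_i^{-1}\int\widetilde u^i(y,t)\omega_{i,\eps}(y,t)\,\rmd y$ vanishes, so one is left with $|(x-B)\cdot\widetilde u^i(x,t)|/|x-B|$. Using the explicit form of $K$, the identity $(x-B)\wedge(x-B)=0$ and the centre-of-vorticity identity $\int(y-B)\omega_{i,\eps}(y,t)\,\rmd y=0$, the plan is to rewrite this quantity as $\frac{1}{2\pi}\int(x-B)\wedge(y-B)\bigl[|x-y|^{-2}-|x-B|^{-2}\bigr]\omega_{i,\eps}(y,t)\,\rmd y$ and split the domain at $|y-B|=R_t-\rho$. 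On the inner region the hypothesis Eq.~\eqref{erret} guarantees $|x-y|\ge \rho/2$ and $|x-B|\ge 7\rho/2$, so expanding the difference of reciprocals produces integrands of the form $|y-B|^2/|x-y|^2$ times the vorticity; this gives a bound of order $I^i_\eps(t)/\rho^2$, which after division by $|x-B|\ge 7\rho/2$ becomes the announced $CI^i_\eps(t)/\rho^3$. On the outer region no lower bound on $|x-y|$ holds, so the plan is to use the sup bound $|\omega_{i,\eps}|\le 3M\eps^{-2}$ from Lemma~\ref{norme di omega} together with a symmetric rearrangement of $|\omega_{i,\eps}|$ about $x$ on this region: the rearranged tail-mass $m^i_\eps(R_t-\rho,t)$ fills a disk of radius $r$ with $3\pi M\eps^{-2}r^2 = m^i_\eps(R_t-\rho,t)$, and $\int_{|z|<r}|z|^{-1}\,\rmd z = 2\pi r$ delivers exactly the square-root term in Eq.~\eqref{eq:variazonemax}.

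For the external contribution, the decomposition $F^i_\eps = F^i_{\eps,1} + F^i_{\eps,2}$ of Lemma~\ref{comesonoFeps} is invoked. The Lipschitz piece is handled by inserting $\pm F^i_{\eps,1}(B)$ into both $F^i_{\eps,1}(x)$ and the averaged term, and then using the Lipschitz bound from Lemma~\ref{comesonoFeps} together with $|y-B|\le R_t\le (8/7)|x-B|$; this yields a term linear in $|x-B|$ with the coefficient $\tfrac{16}{7}|a_i|C_F$ (the factor $16/7$ coming from the same geometric ratio $R_t/|x-B|\le 8/7$ which appeared in the inner region above). The remainder $F^i_{\eps,2}$, of size $\log|\log\eps|/|\log\eps|^\alpha$ by Lemma~\ref{comesonoFeps}, is absorbed in the $C/|\log\eps|^{\alpha-1}$ term. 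The main obstacle is the careful handling of the outer region: a Chebyshev type tail estimate through $I^i_\eps$ alone would only give $I^i_\eps/(R_t-\rho)^2$, which is too weak for the iterative bootstrap that will follow, so it is essential to retain the separate tail-mass $m^i_\eps(R_t-\rho,t)$, controlled only via the $L^\infty$ bound and rearrangement, so that the induction on the diameter of the support can actually close.
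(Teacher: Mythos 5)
Your proposal follows essentially the same route as the paper's proof: the same radial-derivative identity, the same split into self-induced and external parts, Proposition~\ref{u e utilde} plus the antisymmetry identity for the planar replacement, the centre-of-vorticity subtraction with $|x'-y'|\ge\rho/2$ on the inner region, rearrangement on the outer region, and the Lipschitz bound on $F^i_{\eps,1}$ with the ratio $R_t\le\tfrac87|\phi_t(x_0)-B^i_\eps(t)|$ producing the factor $\tfrac{16}{7}$. The only point needing one extra line is that, having performed the centre-of-vorticity subtraction globally, on the outer region you must undo it via $|(x-B)\wedge(y-B)|\le|x-B|\,|x-y|$ so that the $|x-y|^{-2}$ piece reduces to the $|x-y|^{-1}$ singularity amenable to rearrangement, while the leftover $|x-B|^{-2}$ tail piece is bounded by Chebyshev by $CI^i_\eps(t)/R_t^3$ and absorbed into $CI^i_\eps(t)/\rho^3$ (the paper avoids this by subtracting only on the inner region).
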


\begin{proof}
Letting $x= \phi_t(x_0)$, we have
\begin{gather*}
\frac{\rmd}{\rmd t}  |\phi_t(x_0)- B^i_\eps(t)| = [u^i(x,t)+F^i_\eps(x,t)- \dot B^i_\eps(t)]  \cdot \frac{x-B^i_\eps(t)}{|x-B^i_\eps(t)|} \\ = \left[ \int\! \rmd y \, (F^i_\eps (x,t)-F^i_\eps (y,t)) \, \omega_{i,\eps}(y,t) \right] \cdot \frac{x-B^i_\eps(t)}{|x-B^i_\eps(t)|} \\ \quad + \left[ \int\! \rmd  y \, (u^i(x,t)-u^i(y,t)) \, \omega_{i,\eps}(y,t) \right] \cdot \frac{x-B^i_\eps(t)}{|x-B^i_\eps(t)|} \\ =: Q_F + Q_u .
\end{gather*}
The term involving $F^i_\eps$ is easily bounded using Lemma \ref{comesonoFeps} and Eq.~\eqref{norme*},
\begin{align}
\label{f1}
|Q_F| & \le \int\! \rmd y \, |F^i_\eps(x,t) - F^i_\eps(y,t)| \, |\omega_{i,\eps}(y,t)| \le C_F \int\! \rmd y \, |x-y| \, |\omega_{i,\eps}(y,t)|  \nonumber \\ & \quad + \frac{2C_F'\log|\log\eps|}{|\log \eps|^{\alpha}} \le 2|a_i|C_F R_t +\frac{2C_F'\log|\log\eps|}{|\log \eps|^{\alpha}} \nonumber \\ & \le\frac{16}7 |a_i| C_F \, |\phi_t(x_0)- B^i_\eps(t)| +\frac{2C_F'\log|\log\eps|}{|\log \eps|^{\alpha}}\,,
\end{align}
where in the last inequality we used that, by Eq.~\eqref{erret}, $R_t \le |\phi_t(x_0)- B^i_\eps(t)| + \frac 18 R_t$. For the term involving $u^i$, we use the identity $\int\!\rmd y\, \widetilde u^i(y,t)\,\omega_{i,\eps}(y,t) =0$ and Proposition \ref{u e utilde}, getting
\begin{align}
\label{f2}
|Q_u| & \le |u^i(x,t)- \widetilde{u}^i(x, t)| + \int\! \rmd y\, (u^i(y,t) - \widetilde{u}^i(y,t)) \, |\omega_{i,\eps}(y,t)| + |Q_u'| \nonumber \\ & \le  (1+|a_i|) \frac{C}{|\log\eps|^{\alpha -1}} + |Q_u'|\,,
\end{align}
where
\[
Q_u' =\frac{x-B^i_\eps(t)}{|x-B^i_\eps(t)|}  \cdot \widetilde{u}^i(x,t) = \frac{x-B^i_\eps(t)}{|x-B^i_\eps(t)|} \cdot \int\! \rmd y\, K(x-y) \, \omega_{i,\eps}(y,t)\,.
\]
To evaluate this (non trivial) term, we decompose the domain of integration into the two regions
\[
A_1 := \Sigma\big(B^i_\eps(t) \big| R_t - \rho \big)\,, \quad A_2:= \mathbb{R}^2 \setminus A_1\,.
\]
We call $H_1$ and $H_2$ the resultant integrals. We adapt for $H_1$ the proof of \cite[Lemma~2.5]{butmar}. Recalling Eq.~\eqref{nucleoK} and the notation $x^\perp = (x_2,-x_1)$ for $x = (x_1,x_2)$, after introducing the new variables $x'=x-B^i_\eps (t)$,  $y'=y-B^i_\eps (t)$, and using that $x'\cdot (x'-y')^\perp=-x'\cdot y'^\perp$, we get
\begin{equation}
\label{in H_11}
H_1 = \frac{1}{2\pi} \int_{|y'|\le R_t-\rho}\! \rmd y'\, \frac{x'\cdot y'^\perp}{|x'||x'-y'|^2}\, \omega_{i,\eps} (y'+B^i_\eps (t),t) \;.
\end{equation}
By  definition of  center of vorticity Eq.~\eqref{cen_vort}, $\int \rmd y'\,  y'^\perp\, \omega_{i,\eps} (y'+B^i_\eps (t),t) = 0$, so that
\begin{equation}
\label{in H_13*}
H_1  = H_1'-H_1''\;, 
\end{equation}
where
\begin{eqnarray*}
&& H_1' = \frac{1}{2\pi}  \int_{|y'|\le R_t-\rho}\! \rmd y'\, \frac {x'\cdot y'^\perp}{|x'|}\, \frac {y'\cdot (2x'-y')}{|x'-y'|^2 \ |x'|^2} \, \omega_{i,\eps} (y'+B^i_\eps (t),t)\;, \\ && H_1''= \frac{1}{2\pi} \int_{|y'|>R_t-\rho}\! \rmd y'\, \frac{x'\cdot y'^\perp}{|x'|^3}\, \omega_{i,\eps} 
(y'+B^i_\eps (t),t)\;.
\end{eqnarray*}
From Eq.~\eqref{erret} we have $|x'| \geq R_t -\frac{\rho}{2}$, and hence $|y'| \le R_t-\rho$ implies $|x'-y'|\ge \frac{\rho}{2}$ and $|2x'-y'|\le |x'-y'|+|x'| \le  |x'-y'| +R_t$, so that
\begin{equation*}
\begin{split}
|H_1'|&\le \frac{C}{\rho R_t^2 + \rho^2 R_t}  \int_{|y'|\le R_t-\rho} \! \rmd y'\, |y'|^2 \, |\omega_{i,\eps} (y'+B^i_\eps (t),t)| \le \frac{C I^i_\eps (t)}{\rho R_t^2 + \rho^2 R_t} \le \frac{C I^i_\eps (t)}{\rho^3}\,.
\end{split}
\end{equation*}
To bound $H_1''$ we note that, in view of Eq.~\eqref{max_lambda}, the integration is restricted to $|y'|\le R_t$, so that (using the lower bound in Eq.~\eqref{erret})
\begin{equation*}
|H_1''| \le \frac{C \, R_t}{\left(R_t-\rho\right)^2} \int_{|y'|> R_t-\rho}\! \rmd y'\, |\omega_{i,\eps} (y'+B^i_\eps (t),t)|\le \frac{C I^i_\eps (t)}{ R_t^3}\;,
\end{equation*}
where in the last bound we used the Chebyshev's inequality. By Eq.~\eqref{in H_13*} and the previous estimates we conclude that
\begin{equation}
\label{H_14}
|H_1| \le \frac{C I^i_\eps (t)}{ \rho^3}\;.
\end{equation}
We analyze now $H_2$. We first note that
\[ 
|H_2| \le \frac{1}{2\pi} \int_{|y-B^i_\eps(t)|>R_t-\rho}\! \rmd y \, \frac{1}{|x-y|} \, |\omega_{i,\eps}(y, t)| \,,
\]
so we can bound $H_2$ using again rearrangement, as in the proof of Proposition \ref{u e utilde}, i.e., we bound the integral taking a vorticity concentrated, as much as possible, around the singularity of $\frac{1}{|x-y|}$.
Therefore, the rearrangement is achieved replacing $\omega_{i,\eps}$ with the function equal to $3 M \eps^{-2}$ for $|x-y|<r$, and equal to $0$ for $|x-y| \ge r$, where $r$ is chosen such that $3 M \eps^{-2} \pi r^2= m^i_\eps(R_t-\rho,t)$ (which is the ``total mass'' of $\omega_{i,\eps}$ in the integration domain $A_2$). Then
\[
|H_2| \le \frac{3 M \eps^{-2}}{2 \pi} \int_{|z|<r} \! \frac{\rmd z}{z} = 3 M \eps^{-2} r = \sqrt{\frac{3\,M m^i_\eps(R_t-\rho, t)}{\pi \, \eps^2}}\,.
\]
From this last estimate and Eqs.~\eqref{f1}, \eqref{f2}, and \eqref{H_14}, Eq.~\eqref{eq:variazonemax} follows.
\end{proof}

We investigate now the behavior near to $0$ of the function
$m^i_\eps (\cdot, t)$ introduced in Eq.~\eqref{funzemme}.

\begin{lemma} 
\label{lem:mt}
Recalling Eqs.~\eqref{betabeta}, \eqref{Tebe}, and item (ii) in Lemma \ref{comesonoFeps}, assume
\begin{equation}
\label{eta1}
\eta <\frac{\alpha-1}{2 C_F}
\end{equation}
and fix
\begin{equation}
\label{beta_barra}
\beta<k<\frac{\alpha-1}{4}\,.
\end{equation}
Then, for each $\ell>0$ there exists $T_\ell>0$ such that, for any $\eps$ small enough,
\begin{equation}
\label{lemma_massa}
m^i_\eps\left(\frac{1}{|\log\eps|^k}, t\right) \le \frac{|a_i|}{|\log\eps|^\ell}
\quad \forall\, t\in  [0,T_\ell \wedge T_{\eps,\beta,\eta}]\,.
\end{equation} 
\end{lemma}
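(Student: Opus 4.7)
The proof will proceed by an iterative bootstrap, combining Chebyshev's inequality with the boundary growth estimate of Lemma \ref{variazionemax}. The starting point is the crude estimate from Proposition \ref{lem:Ieps Beps}: by Chebyshev,
\[
m^i_\eps(R,t) \le \frac{I^i_\eps(t)}{R^2} \le \frac{C}{R^2\,|\log\eps|^{2(\alpha-1-\eta C_F)}}
\]
for $t\in[0,T_{\eps,\beta,\eta}]$. The assumption Eq.~\eqref{eta1} that $\eta<(\alpha-1)/(2C_F)$ guarantees $2(\alpha-1-\eta C_F)>\alpha-1>0$, so taking $R$ of order $|\log\eps|^{-k_0}$ for $k_0>0$ sufficiently small yields an initial bound of the form $m^i_\eps(|\log\eps|^{-k_0},t)\le |a_i||\log\eps|^{-\lambda_0}$ with $\lambda_0>0$.

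The plan is then to set up a finite iteration that sharpens both the radius and the mass bound simultaneously. Fix $n_0\in\bb N$ depending only on $\ell$, and choose increasing sequences $k_0<k_1<\cdots<k_{n_0}=k$ (all staying strictly below $(\alpha-1)/4$, using Eq.~\eqref{beta_barra}) and $0<\lambda_0<\lambda_1<\cdots<\lambda_{n_0}=\ell$. The inductive claim to be verified is
\[
m^i_\eps\bigl(|\log\eps|^{-k_n},t\bigr)\le \frac{|a_i|}{|\log\eps|^{\lambda_n}} \qquad \forall\,t\in[0,T_\ell\wedge T_{\eps,\beta,\eta}]
\]
for every $n=0,1,\ldots,n_0$, with $T_\ell>0$ a universal constant to be fixed at the end. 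The base case is the Chebyshev estimate above. For the inductive step $n\to n+1$, I would apply Lemma \ref{variazionemax} to a trajectory $\phi_t(x_0)$ achieving (or approaching) the maximum distance $R_t$, choosing $\rho$ of order $|\log\eps|^{-k_n}$ so that $R_t-\rho\ge |\log\eps|^{-k_n}$. The inductive hypothesis then bounds the self-consistent term $\sqrt{3M m^i_\eps(R_t-\rho,t)/(\pi\eps^2)}$ by roughly $\eps^{-1}\sqrt{|a_i|}|\log\eps|^{-\lambda_n/2}$, while Proposition \ref{lem:Ieps Beps} controls $I^i_\eps(t)/\rho^3$ and the remaining terms in Eq.~\eqref{eq:variazonemax} are small. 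Integrating the differential inequality over $[0,T_\ell]$, together with the initial condition $R_0\le 2\eps$, confines $R_t$ and, through the $L^\infty$ bound of Lemma \ref{norme di omega} and the transport of mass, yields the improved estimate at radius $|\log\eps|^{-k_{n+1}}$. After $n_0$ iterations we reach $k_{n_0}=k$ and $\lambda_{n_0}=\ell$, proving the claim.

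The main obstacle is the calibration of parameters. Although $m^i_\eps/\eps^2$ can be driven to be polynomially small in $|\log\eps|$, its square root is still of order $\eps^{-1}$, so $T_\ell$ must be chosen as a universal constant small enough that no vorticity particle can cross the gap $|\log\eps|^{-k_n}-|\log\eps|^{-k_{n+1}}$ during $[0,T_\ell]$. Simultaneously $\rho$ must be large enough that $I^i_\eps(t)/\rho^3$ vanishes as $\eps\to 0$, which is where the precise restriction $k<(\alpha-1)/4$ enters: the exponent budget $2(\alpha-1-\eta C_F)-3k$ must remain positive at every step. Balancing these constraints with the requirement of a finite, $\eps$-independent number of iterations $n_0=n_0(\ell)$ is the delicate technical point; all other ingredients are routine consequences of the estimates already established in Propositions \ref{stimaD}--\ref{lem:Ieps Beps} and Lemmas \ref{norme di omega}, \ref{comesonoFeps}, \ref{variazionemax}.
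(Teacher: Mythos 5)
Your proposal does not follow the paper's route, and the route you propose has a gap that cannot be closed. The paper proves this lemma by introducing a mollified mass $\mu_t(R,h)=\int \rmd x\,[1-W_{R,h}(x-B^i_\eps(t))]\,|\omega_{i,\eps}(x,t)|$, deriving the differential inequality $\frac{\rmd}{\rmd t}\mu_t(R,h)\le A_\eps(R,h)\,\mu_t(R-h,h)$ with $A_\eps(R_j,h)\le 5C_A\log|\log\eps|$, and iterating it over $n=\lfloor\log|\log\eps|\rfloor$ nested radii between $|\log\eps|^{-k}$ and $\tfrac12|\log\eps|^{-k}$; since the initial mass vanishes on all these annuli, the iteration yields $m^i_\eps(R_0,t)\le |a_i|(5\rme C_A t)^{n}$, which is $\le |a_i||\log\eps|^{-\ell}$ for $t\le T_\ell=(5C_A)^{-1}\rme^{-\ell-1}$. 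The essential mechanism is the factorial gain from an $\eps$-dependent number of iterations in the \emph{radius} variable; nothing like it appears in your argument.

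The concrete obstruction to your bootstrap is the term $\sqrt{3M\eps^{-2}m^i_\eps(R_t-\rho,t)/\pi}$ in Eq.~\eqref{eq:variazonemax}. Under your inductive hypothesis $m^i_\eps\le |a_i||\log\eps|^{-\lambda_n}$ this term is of order $\eps^{-1}|\log\eps|^{-\lambda_n/2}$, which \emph{diverges} as $\eps\to 0$; integrated over any fixed $T_\ell>0$ it permits a displacement far exceeding the gap $|\log\eps|^{-k_n}-|\log\eps|^{-k_{n+1}}=O(|\log\eps|^{-k_n})$, so no ``universal constant $T_\ell$ small enough'' exists. This is precisely why the paper needs the \emph{separate} Lemma~\ref{lem:mt2}, giving $m^i_\eps\le|a_i|\eps^\ell$ (so that $\sqrt{\eps^{-2}m^i_\eps}\le\sqrt{|a_i|}\,\eps^{(\ell-2)/2}$ is small), before Lemma~\ref{variazionemax} can be used to confine the support in Step~1 of the main proof. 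Your scheme tries to run this chain backwards: you use support confinement (which requires the strong $\eps^\ell$ mass bound) to derive the weak $|\log\eps|^{-\ell}$ mass bound, which is the wrong logical order. A secondary issue is that confining $R_t$ would actually give $m^i_\eps=0$ outside the support rather than an ``improved estimate,'' so the intermediate exponents $\lambda_n$ have no mechanism to increase gradually; the Chebyshev bound $I^i_\eps(t)/R^2$ caps the attainable exponent at $2(\alpha-1)-2\eta C_F-2k$, and only the mollified-mass iteration (or something equivalent) breaks through this barrier to reach arbitrary $\ell$.
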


\begin{proof}
Given $R\ge 2h>0$, let $W_{R,h}(x)$, $x\in \mathbb{R}^2$, be a non-negative smooth function, depending only on $|x|$, such that
\begin{equation}
\label{W1}
W_{R,h}(x) = \begin{cases} 1 & \text{if $|x|\le R$}, \\ 0 & \text{if $|x|\ge R+h$}, \end{cases}
\end{equation}
and, for some $C_W>1$,
\begin{equation}
\label{W2}
|\nabla W_{R,h}(x)| < \frac{C_W}{h}\,,
\end{equation}
\begin{equation}
\label{W3}
|\nabla W_{R,h}(x)-\nabla W_{R,h}(x')| < \frac{C_W}{h^2}\,|x-x'|\,. 
\end{equation}

We define the quantity
\begin{equation}
\label{mass 1}
\mu_t(R,h) = \int\! \rmd x \, \big[1-W_{R,h}(x-B^i_\eps(t))\big]\, |\omega_{i,\eps} (x,t)|\,,
\end{equation}
which is a mollified version of $m^i_\eps$, satisfying
\begin{equation}
\label{2mass 3}
\mu_t(R,h) \le m^i_\eps(R, t) \le \mu_t(R-h,h)\,.
\end{equation}
In particular, it is enough to prove the claim with $\mu_t$ instead of $m^i_\eps$. The advantage is that the function $\mu_t$ is differentiable (with respect to $t$), with 
\begin{align*} 
\frac{\rmd}{\rmd t} \mu_t(R,h) & =  - \frac{a_i}{|a_i|} \int\! \rmd x \, \nabla W_{R,h}(x-B^i_\eps(t)) \cdot [ u^i(x,t)+F_\eps(x,t)-\dot{B}^i_\eps(t) ] \, \omega_{i,\eps}(x,t) \\ & = -H_3- H_4 - H_5\,,
\end{align*}
where, using Eq.~\eqref{antisym} and that $\dot{B}^i_\eps(t) = \int \rmd y \, \omega_{i,\eps}(y,t) [ F_\eps (y,t)+ u^i(y,t)]$,
\begin{align*}
H_3 & = \frac{a_i}{|a_i|} \int\! \rmd x \,\nabla W_{R, h}(x-B^i_\eps(t)) \cdot \widetilde{u}^i(x,t) \, \omega_{i,\eps}(x,t) \\
H_4 & = \frac{a_i}{|a_i|} \int\! \rmd x \, \nabla W_{R, h}(x- B^i_\eps(t)) \cdot \left[F_{\eps, 1}(x,t) - \int \rmd y\,  F_{\eps, 1}(y,t) \, \omega_{i,\eps}(y,t)\right] \, \omega_{i,\eps}(x,t) \\ H_5 & =  \frac{a_i}{|a_i|} \int\! \rmd x \, \nabla W_{R, h}(x-B^i_\eps(t))\\ & \;\;  \cdot \bigg[u^i(x,t)- \widetilde{u}^i(x,t) - \int\! \rmd y \,[u^i(y,t)- \widetilde{u}^i(y,t)] \, \omega_{i,\eps}(y,t) \bigg] \, \omega_{i,\eps}(x,t) \\ & \ + \frac{a_i}{|a_i|} \int\! \rmd x \,\nabla W_{R, h}(x-B^i_\eps(t)) \cdot \left[F_{\eps, 2}(x,t) - \int\! \rmd y \, F_{\eps, 2} (y, t) \, \omega_{i,\eps} (y, t) \right] \omega_{i,\eps}(x, t) \,.
\end{align*}
We immediately observe that, thanks to Proposition \ref{u e utilde}, Lemma \ref{comesonoFeps}, Eq.~\eqref{W2}, and to the fact that $\nabla W_{R, h}(z)$ is zero if $|z| \le R$,
\begin{equation}
\label{H5} 
|H_5| \le \frac{C_W}{h} \frac{C}{|\log\eps|^{\alpha-1}} \, m^i_\eps(R,t)\,.
\end{equation}
Along the same lines of \cite[Proposition~4.2]{butcavmar}  we find the following estimates, whose proofs are postponed in Appendix \ref{app:A}, for $H_3$ (corresponding to $A_1$ in \cite{butcavmar}),
\begin{equation} 
\label{H3} 
|H_3| \le C\left(\frac{1}{R^3 h} + \frac{1}{R^2 h^2}  \right) I^i_\eps(t) \, m^i_\eps(R,t) \, ,
\end{equation}
and for $H_4$ (corresponding to $A_3$ in \cite{butcavmar}),
\begin{equation} 
\label{H4} 
|H_4| \le C_W C_F  |a_i| \left( 1 + \frac{2R}{h} \right)  m^i_\eps(R,t) +  \frac{2 C_W C_F \, I^i_\eps(t)}{R^2 \, h} m^i_\eps(R,t) \, .
\end{equation}

Recalling Eq.~\eqref{Iepsilon}, from estimates Eqs.~\eqref{H5}, \eqref{H3},  and \eqref{H4}, we get
\begin{equation}
 \frac{\rmd}{\rmd t} \mu_t (R,h) \le A_\eps(R, h) m^i_\eps(R,t) \qquad \forall\, t\in  [0, T_{\eps,\beta,\eta}]\,,
\label{equ_mm}
\end{equation}
where
\begin{align}
\label{ARh}
A_\eps(R, h) & = \frac{C \, |\log\eps|^{2 \eta C_F }}{R^3 h |\log\eps|^{2(\alpha-1)}} +\frac{C |\log\eps|^{2 \eta C_F }}{R^2h^2 |\log\eps|^{2(\alpha-1)}} + \frac{C |\log\eps|^{2 \eta C_F }}{R^2h  |\log\eps|^{2(\alpha-1)}} \nonumber  \\ & \quad + \frac{C}{ h|\log\eps|^{\alpha-1}} + C  + 2|a_i| C_FC_W \frac Rh \,.
\end{align}
Therefore, by Eqs.~\eqref{2mass 3} and \eqref{equ_mm}, for any $\eps$ sufficiently small,
\begin{equation}
\label{da_iterare}
\mu_t (R,h) \le \mu_0 (R,h) + A_\eps(R, h) \int_0^t\, \rmd s  \, \mu_s (R-h, h) \qquad \forall\, t\in  [0, T_{\eps,\beta,\eta}]\,.
\end{equation}
We iterate the last inequality $n=\lfloor\log|\log\eps|\rfloor$  times (where $\lfloor a\rfloor$ denotes the integer part of the positive number $a$), from
\[
R_0 =\frac{1}{|\log\eps|^{k}}  \qquad \textnormal{to} \qquad R_n= R_0-n h = \frac{1}{2|\log\eps|^{k}} \,,
\]
with $k>0$ as in Eq.~\eqref{beta_barra}.  As a consequence,
\[
h= \frac{1}{2 n |\log\eps|^{k}} \sim \frac{\log|\log\eps|}{2|\log\eps|^k}
\]
and $R_j \in [R_0, R_n]$, $j=1,\dots, n$. Plugging these choices in Eq.~\eqref{ARh} we get that, for any $\eps$ small enough,
\begin{align}
\label{Arhj}
A_\eps(R_j, h) & \le  \frac{C|\log\eps|^{2 \eta C_F}} {|\log\eps|^{2(\alpha-1)}} \bigg( \frac{1}{R_j^3h} + \frac{1}{R_j^2h^2}\bigg) + \frac{C}{h|\log\eps|^{\alpha-1}} +  C  + 2|a_i| C_FC_W \frac{R_j}h \nonumber \\ & \le C \bigg(\frac{|\log\eps|^{4{k}+2 \eta C_F}(\log|\log\eps|)^2}{|\log\eps|^{2(\alpha -1)}} + \frac{\log|\log\eps|}{|\log\eps|^{\alpha-k-1}} + 1\bigg) \nonumber \\ & \quad + 4|a_i| C_FC_W  \log|\log\eps| \,.
\end{align}
In view of the assumptions Eqs.~\eqref{eta1} and \eqref{beta_barra}, Eq.~\eqref{Arhj} implies that, for any $\eps$ small enough, 
\begin{equation}
\label{CA}
A_\eps(R_j, h) \le 5 C_A \log|\log\eps |\,, \qquad C_A:= C_FC_W\max_i |a_i| \,.
\end{equation}
Therefore, for any $t\in  [0,T_{\eps,\beta,\eta}]$, it results 
\[
\begin{split}
\mu_t(R_0-h,h) & \le \mu_0(R_0-h,h) + \sum_{j=1}^{n-1} \mu_0(R_j,h) \frac{(5C_A\log |\log\eps|\, t)^j}{j!} \\ & \quad + \frac{(5C_A \log |\log\eps| )^{n}}{(n-1)!} \int_0^t\!{\textnormal{d}} s\,  (t-s)^{n-1}\mu_s(R_{n},h) \,.
\end{split}
\]
Since $\Lambda_\eps(0) \subset \Sigma(z_i|\eps)$, we can determine $\eps$ so small such that $\mu_0(R_j,h)=0$ for any $j=0,\ldots,n$, so that
\begin{align}
\label{mass 15'}
\mu_t(R_0-h,h) &\le \frac{\left(5C_A \log |\log\eps|\right)^{n}}{(n-1)!} \int_0^t\!{\textnormal{d}} s\,  (t-s)^{n-1}\mu_s(R_{n},h) \nonumber \\ & \le  \frac{\left(5C_A \log |\log\eps|\, t\right)^{n}}{n!}|a_i| \quad \forall\, t\in  [0, T_{\eps,\beta,\eta}]\,,
\end{align}
where the obvious estimate $\mu_s(R_{n},h) \le |a_i|$ has been used in the last inequality. In conclusion, using also Eq.~\eqref{2mass 3} and Stirling formula,
\[
m^i_\eps(R_0, t) \le \mu_t(R_0 -h,h) \le |a_i| (5\rme C_A t)^{\lfloor\log|\log\eps|\rfloor} \qquad \forall\, t\in  [0, T_{\eps,\beta,\eta}]\,,
\]
which implies Eq.~\eqref{lemma_massa} with, e.g., $T_\ell = (5C_A)^{-1} \rme^{-\ell-1}$.
\end{proof}

We need now a better result, in order to prove the compact support property of $\omega_{i,\eps} (\cdot, t)$, which is expressed in the following lemma.

\begin{lemma} 
\label{lem:mt2}
In the same assumptions of Lemma \ref{lem:mt}, for each $\ell>0$ there exists $T_\ell' >0$ such that, for any $\eps$ small enough,
\begin{equation}
\label{lemma_massa2}
m^i_\eps\left(\frac{3}{|\log\eps|^k}, t\right) \le |a_i| \eps^\ell \quad \forall\, t\in  [0,T_\ell' \wedge T_{\eps,\beta,\eta}]\,.
\end{equation}
\end{lemma}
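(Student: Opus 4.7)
The plan is to run the same iteration scheme used in the proof of Lemma \ref{lem:mt}, but this time going from the outer radius $3/|\log\eps|^k$ down to $1/|\log\eps|^k$ and using the conclusion of Lemma \ref{lem:mt} itself as the base-case bound rather than the trivial estimate $|a_i|$. The point is that the new base bound $|a_i|/|\log\eps|^{\ell_*}$ is already small (polylogarithmically in $\eps$), so running the iteration sufficiently many times will upgrade this to a polynomial-in-$\eps$ bound $|a_i|\eps^\ell$, at the cost of shrinking the time horizon to a new $T_\ell'>0$.

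Concretely, I would iterate the inequality \eqref{da_iterare} $n$ times between $R_0 = 3/|\log\eps|^k$ and $R_n = 1/|\log\eps|^k$, with step $h = 2/(n|\log\eps|^k)$. Since $\Lambda_{i,\eps}(0)\subset\Sigma(z_i|\eps)$ and $R_j\ge 1/|\log\eps|^k \gg \eps$, all the initial contributions $\mu_0(R_j,h)$ vanish for $\eps$ small; moreover $\mu_s(R_n,h)\le m^i_\eps(R_n,s) \le |a_i|/|\log\eps|^{\ell_*}$ by Lemma \ref{lem:mt}, valid for $s\in[0, T_{\ell_*}\wedge T_{\eps,\beta,\eta}]$ with $\ell_*$ to be chosen. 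Iterating exactly as in the derivation of Eq.~\eqref{mass 15'} and invoking Stirling's formula yields
\[
m^i_\eps(R_0,t) \le \mu_t(R_0-h,h) \le \frac{(A_* t)^n}{n!}\cdot\frac{|a_i|}{|\log\eps|^{\ell_*}} \le \biggl(\frac{A_* e t}{n}\biggr)^{\!n}\cdot\frac{|a_i|}{|\log\eps|^{\ell_*}},
\]
where $A_* = \max_j A_\eps(R_j,h)$.

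To obtain the polynomial factor $\eps^\ell$, the number of iterations $n$ must now be taken of order $|\log\eps|^p$ for some $p\ge 1$ (rather than $\log|\log\eps|$ as in Lemma \ref{lem:mt}), so that the iteration factor $(A_* e t/n)^n$ is itself a genuine power of $\eps$ once $t$ is smaller than some $\eps$-independent threshold. A careful inspection of the expression \eqref{ARh} under the assumptions \eqref{eta1} and \eqref{beta_barra} shows that, with this choice of $h$, the dominant Lipschitz contribution $2|a_i| C_F C_W R_j/h$ to $A_\eps(R_j,h)$ is $O(n)$, while the remaining terms carry compensating negative powers of $|\log\eps|$ arising from $\eta <(\alpha-1)/(2C_F)$ and $4k<\alpha-1$. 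The main technical obstacle is to verify uniformly in $n$ and $j$ that $A_*$ stays controlled by the right power of $|\log\eps|$, so that $A_* e t/n$ can be made less than $1$ by choosing $t$ smaller than some constant depending on $\ell$; once this is done, optimizing $n$ and $\ell_*$ in terms of $\ell$ produces the claimed bound $m^i_\eps(3/|\log\eps|^k,t)\le |a_i|\eps^\ell$ on a time interval $[0, T_\ell'\wedge T_{\eps,\beta,\eta}]$ with $T_\ell'>0$ independent of $\eps$.
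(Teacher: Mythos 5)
Your overall plan (rerun the iteration of Lemma \ref{lem:mt} with many more steps to convert a polylogarithmic bound into a power of $\eps$) is the right one, and your bookkeeping of the initial data and of the base case is fine. But there is a genuine gap at the point you flag as ``the main technical obstacle'': the claim that, under only \eqref{eta1} and \eqref{beta_barra}, the terms of $A_\eps(R_j,h)$ other than $2|a_i|C_FC_WR_j/h$ carry compensating negative powers of $|\log\eps|$ is false. To produce a factor $\eps^\ell$ you are forced to take $n\sim|\log\eps|$ (the improved base case $|a_i|/|\log\eps|^{\ell_*}$ from Lemma \ref{lem:mt} only buys a factor $\rme^{-\ell_*\log|\log\eps|}$, which is negligible compared with $\eps^\ell$, so it cannot reduce the number of iterations below order $|\log\eps|$). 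With $n\sim|\log\eps|$ the step is $h\sim|\log\eps|^{-(k+1)}$, and the term $C|\log\eps|^{2\eta C_F}/(R_j^2h^2|\log\eps|^{2(\alpha-1)})$ in \eqref{ARh} is of order $|\log\eps|^{4k+2+2\eta C_F-2(\alpha-1)}$. Since the hypotheses only give $4k<\alpha-1$ and $2\eta C_F<\alpha-1$, this exponent can be as large as $2-\delta$; in particular for $1<\alpha\le 3/2$ it is necessarily $>1$, so $A_*t/n$ diverges as $\eps\to0$ and the factor $(A_*\rme t/n)^n$ blows up instead of decaying. This is exactly the obstruction the paper points out.

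The missing idea is that Lemma \ref{lem:mt} must be used not as the base case of the recursion but \emph{inside} the estimate of the coefficient $A_\eps(R_j,h)$. The offending $1/(R_j^2h^2)$ term originates from the bound \eqref{badterm} on $H_3''+H_3'''$, where the outer mass $\int_{|y'|\ge R/2}\tilde\omega_{i,\eps}=m^i_\eps(R/2,t)$ was controlled by Chebyshev through $I^i_\eps(t)$. If instead one iterates only over the thin annulus from $3|\log\eps|^{-k}$ down to $\tfrac52|\log\eps|^{-k}$, then $R_j/2\ge|\log\eps|^{-k}$ throughout, and Lemma \ref{lem:mt} with $\ell=\alpha+1$ gives $m^i_\eps(R_j/2,t)\le|a_i|/|\log\eps|^{\alpha+1}$ on $[0,T_{\alpha+1}\wedge T_{\eps,\beta,\eta}]$. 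Substituting this into \eqref{badterm} replaces the divergent term by one of order $|\log\eps|^{4k-\alpha+1}=o(1)$, so that $A_\eps(R_j,h)\le 13C_A|\log\eps|$ and the iteration closes, yielding $m^i_\eps(R_0,t)\le|a_i|(13C_At)^{\lfloor|\log\eps|\rfloor}\le|a_i|\eps^\ell$ for $t\le T_\ell'=[(13C_A)^{-1}\rme^{-\ell-1}]\wedge T_{\alpha+1}$. Without this step your argument does not go through for $\alpha$ close to $1$, which is the regime the paper is designed to cover.
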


\begin{proof}
The strategy of the proof is the same of the previous lemma, with the difference that we need now to take a larger number of iteration, that is $n=\lfloor|\log\eps|\rfloor$. With this choice, the first inequality in Eq.~\eqref{Arhj} is still true, but it implies, as now $h \sim |\log\eps|^{k+1}$,
\[
A_\eps(R_j, h)  \le C_A |\log\eps| + \frac{C|\log\eps|^{4k+2 \eta C_F+2}}{|\log\eps|^{2(\alpha -1)}}\,.
\]
The last term in the right-hand side gives some problems, at least in the range $1<\alpha\le 3/2$, since it diverges as $|\log\eps|^q$, with $q>1$, thus the iterative method does not work. On the other hand, this is due only to the (worst) term $1/(R_j^2h^2)$ appearing in Eq.~\eqref{Arhj}. This term, see the proof of Eq.~\eqref{H3} in Appendix \ref{app:A}, comes from the estimate Eq.~\eqref{badterm}, in which the quantity $\int_{|y'| \ge \frac{R}{2}}\!\rmd y'\, \tilde\omega_{i,\eps}(y',t) = m^i_\eps(R/2, t)$ is bounded from above by the moment of inertia. But, in view of Lemma \ref{lem:mt} with $\ell=\alpha+1$, if $R\ge 3|\log\eps|^{-k}$ then
\[
m^i_\eps(R/2, t) \le m^i_\eps\left(\frac{1}{|\log\eps|^k}, t\right) \le \frac{|a_i|}{|\log\eps|^{\alpha+1}}\quad \forall\, t\in  [0,T_{\alpha+1} \wedge T_{\eps,\beta,\eta}]\,,
\]
for any $\eps$ sufficiently small.

We then assume $t\in  [0,T_{\alpha+1} \wedge T_{\eps,\beta,\eta}]$ and iterate Eq.~\eqref{da_iterare} from
\[
R_0 =\frac{3}{|\log\eps|^k}  \qquad \textnormal{to} \qquad R_n= R_0-n h = \frac{5}{2|\log\eps|^k} \,,
\]
whence, consequently 
\[
h = \frac{1}{2n|\log\eps|^k} \sim \frac{1}{2|\log\eps|^{k+1}}\,.
\]
Therefore, $R_j \in [R_0, R_n]$,  $j=1, \dots, n$, so that, by using the above estimate for $m^i_\eps\left(R_j/2, t\right)$, the quantity  $A_\eps(R_j, h)$ is now bounded as
\begin{align*}
& A_\eps(R_j, h) \le \frac{C|\log\eps|^{2 \eta C_F}}{R_j^3h|\log\eps|^{2(\alpha-1)}} + \frac{C}{R_j^2h^2|\log\eps|^{\alpha+1}} + \frac{C}{h|\log\eps|^{\alpha-1}} + C +  2C_A \frac{R_j}h  \\ & \le C \bigg( \frac{|\log\eps|^{4{k}+2 \eta C_F+1}}{|\log\eps|^{2(\alpha -1)}} + |\log\eps|^{4k-\alpha+1} + |\log\eps|^{k-\alpha +2} + 1 \bigg) + 12 C_A |\log\eps|\,,
\end{align*}
whence, by assumptions Eqs.~\eqref{eta1} and \eqref{beta_barra}, $A_\eps(R_j, h) \le 13 C_A |\log\eps|$ for any $\eps$ small enough. Therefore, analogously to what made before,
\[
m^i_\eps(R_0, t) \le \mu_t(R_0 -h,h) \le  |a_i| (13 C_A t)^{\lfloor|\log\eps|\rfloor}\, ,
\]
which implies Eq.~\eqref{lemma_massa2} with $T_\ell' = [(13C_A)^{-1} \rme^{-\ell-1}] \wedge T_{\alpha+1}$.
\end{proof}

We are now ready to prove the main theorem.

\begin{proof}[Proof of Theorem \ref{teoEA}]
The proof is split into three steps.

\medskip
\noindent
\textit{Step 1}. Assume the parameters $\eta$ and $k$ are chosen according to Eqs.~\eqref{eta1} and \eqref{beta_barra}. We claim that, for any $i=1,\ldots,N$, and $\eps$ small enough,
\begin{equation}
\label{lamb}
\Lambda_{i,\eps}(t)\subseteq \Sigma \left(B^i_\eps (t) \, |\,  5\rho_k \right) \qquad \forall\, t\in  [0,T_3' \wedge T_{\eps,\beta,\eta}]\,,
\end{equation}
with $T_3'$ as in Lemma \ref{lem:mt2} for $\ell=3$, and
\[
\rho_k = \frac{1}{|\log\eps|^k}\,.
\]

To prove the claim, we start by noticing that, by Lemma \ref{variazionemax} with $\rho =\rho_k$, if at time $t\in [0,T_3' \wedge T_{\eps,\beta,\eta}]$ we have, for a certain $x_0\in \Lambda_{i,\eps}(0)$,
\begin{equation}
\label{corona}
R_t \ge 4\rho_k\,, \quad R_t - \frac 12 \rho_k \le |\phi_t(x_0)- B^i_{\eps}(t)|\le R_t\,,
\end{equation}
then the time derivative of $|\phi_t(x_0)-B^i_{\eps}(t)|$ is bounded by Eq.~\eqref{eq:variazonemax}, which implies, using also Eq.~\eqref{Iepsilon}, that, for any $\eps$ small enough,
\begin{align}
\label{max_sup}
\frac{\rmd}{\rmd t} |\phi_t(x_0)-B^i_\eps(t)|  & \le \frac{16}7 C_A |\phi_t(x_0)-B^i_\eps(t)| +  \frac{C  |\log\eps|^{3k+2\eta C_F}}{|\log\eps|^{2(\alpha-1)}} + \frac{C}{|\log\eps|^{\alpha-1}} \nonumber \\ & \quad + C \sqrt{\eps^{-2}m^i_\eps(R_t-\rho_k, t)}\,,
\end{align}
where we also used that $|a_i| C_F < C_A$ since $C_W>1$, see Eq.~\eqref{CA}. We now observe that by  Eqs.~\eqref{eta1} and \eqref{beta_barra} the second and third term in the right-hand side of Eq.~\eqref{max_sup} vanish faster than $\rho_k$ as $\eps\to 0$, and the same holds true for the last term by Lemma \ref{lem:mt2} with $\ell =3$ (which applies here as $R_t-\rho_k \ge 3|\log\eps|^{-k}$). On the other hand, if Eq.~\eqref{corona} is true then  $|\phi_t(x_0)-B^i_\eps(t)| \ge \frac 72 \rho_k$, so that Eq.~\eqref{max_sup} implies
\begin{equation}
\label{eq_phi}
\frac{\rmd}{\rmd t}  |\phi_t(x_0)-B^i_\eps(t)|   \le  3C_A |\phi_t(x_0)-B^i_\eps(t)|  \,,
\end{equation}
for any $\eps$ small enough.

We now conclude the proof of the claim. Suppose there is a first time $t_1 \in (0, T_{\eps,\beta,\eta}]$ such that $R_{t_1} = 5 \rho_k$ (otherwise the claim is verified), and define $t_0 = \sup\{t\in [0,t_1) \colon R_t < 4 \rho_k\}$. Clearly, $0 < t_0 < t_1$ (for any $\eps$ small enough) since $R_0 \le 3 \rho_k$; in particular $R_{t_0} = 4\rho_k$ and $R_t \ge 4\rho_k$ for any $t\in [t_0,t_1]$. Let us fix any fluid particle such that $|\phi_{t_1} (x_0)-B^i_\eps(t_1)| = R_{t_1}$ and let
\[
\sigma = \sup\Big\{t\in [t_0,t_1) \colon |\phi_t(x_0)-B^i_\eps(t)| \le \frac 92 \rho_k \Big\}\,.
\]
We notice that $\sigma \in (t_0, t_1)$ and Eq.~\eqref{corona} is verified for any $t\in [\sigma,t_1]$, so that Eq.~\eqref{eq_phi} holds for any $t\in [\sigma,t_1]$. Moreover, $|\phi_\sigma(x_0)-B^i_\eps(\sigma)| = \frac 92 \rho_k$. In conclusion, the function $g(t) := |\phi_t(x_0)-B^i_\eps(t)|$ satisfies 
\[
\dot g(t) \le 3C_A g(t) \quad \forall\, t\in  [\sigma,t_1]\,, \qquad g(\sigma) = \frac 92 \rho_k \,, \qquad g(t_1) = 5 \rho_k\,,
\]
which implies
\begin{equation}
\label{t1-tau}
t_1-\sigma \ge \frac{1}{3C_A} \log\frac{g(t_1)}{g(\sigma)} \ge \frac{1}{3C_A} \log\frac{10}9 > \frac{\rme^{-4}}{13C_A}  \ge T_3'\,.
\end{equation}
Therefore $t_1 \ge T_3' \wedge T_{\eps,\beta,\eta}$ and the claim Eq.~\eqref{lamb} follows.

\medskip
\noindent
\textit{Step 2}. We assume that $k$ is still chosen according to Eq.~\eqref{beta_barra}, while, regarding $\eta$, we require the stronger (with respect to Eq.~\eqref{eta1}) condition
\begin{equation}
\label{eta2}
\eta <\frac{\alpha-1}{2 (C_F+D)}\,,
\end{equation}
with $D$ as in Corollary \ref{cor:B-z}. With these choices, by Eq.~\eqref{B-z},
\[
|B^i_{\eps}(t) - z_i(t)| \le \frac{C}{|\log\eps|^{(\alpha-1)/2}} \qquad \forall\, t\in [0,T_{\eps,\beta,\eta}]\,,
\]
which, together with Eq.~\eqref{lamb} and since $k<(\alpha-1)/2$, implies that, for any $\eps$ small enough,
\[
\Lambda_{i,\eps}(t)\subseteq \Sigma \left(z_i (t) | 6 \rho_k \right) \qquad \forall\, t\in  [0,T_3' \wedge T_{\eps,\beta,\eta}]\,.
\]
Since $\beta<k$, in view of the definitions Eqs.~\eqref{T_eps_beta} and \eqref{Tebe}, a straightforward continuity argument show that $T_{\eps,\beta,\eta}$ must be necessarily larger than $T_3'$ for any $\eps$ small enough. In conclusion, for any $i=1,\ldots,N$, and $\eps$ small enough,
\[
\Lambda_{i,\eps}(t)\subseteq \Sigma \left(z_i (t) | 6 \rho_k \right) \qquad \forall\, t\in  [0,T_3']\,.
\]
This proves that as $\eps\to 0$ the vortex rings remain concentrated and their motion converges to the point vortex dynamics up to the ``positive but short'' time $T_3'$.

\medskip
\noindent
\textit{Step 3}. It remains to explain how to push forward the time of convergence up to the order of $\log|\log\eps|$. The rationale behind the strategy is to realize that, by iterating the previous argument, we end up with a sequence of time interval of convergence that, while shorter and shorter, are not summable.

With the first application of  Lemmata \ref{lem:mt} and \ref{lem:mt2}, combined with Step 2, we have obtained that the support for $\omega_{i,\eps}(\cdot, t)$ is contained in $\Sigma(B^i_\eps(t) | 5 \rho_k) \subseteq \Sigma \left(z_i (t) | 6 \rho_k \right)$ (with $\rho_k=|\log\eps|^{-k}$ and for any $\eps$ small enough) up to the time
\[
\mc T_1 = T_3' = \frac{\rme^{-4}}{13C_A} \wedge T_{\alpha+1} \ge \frac{1}{(3\cdot 2)\cdot 2+1} \tau\,,
\]
where $\tau := C_A^{-1} (\rme^{-4} \wedge \rme^{-\alpha-2})$. Using the data achieved at $\mc T_1$, we can repeat a second time both the iterative methods, which
are stated taking $(5+1)\rho_k$ in place of $\rho_k$ in Eq.~\eqref{lemma_massa}, and taking $(5+3) \rho_k$ in place of $3\rho_k$ in Eq.~\eqref{lemma_massa2}. The iteration is now performed from $(5+\frac12)\rho_k$ to $(5+1)\rho_k$ in Lemma \ref{lem:mt}, and from $(5+\frac52) \rho_k$ to $(5+3) \rho_k$ in Lemma \ref{lem:mt2}, i.e., in the last interval of width $\rho_k/2$ in both lemmata. This implies, arguing as in Steps 1 and 2, that the support of $\omega_{i,\eps} (\cdot, t)$ is contained in $\Sigma(B^i_\eps(t) | (5+5) \rho_k) \subseteq \Sigma \left(z_i (t) | (5+5+1) \rho_k \right)$, for any $\eps$ small enough, up to the time $\mc T(2) = \mc T_1 + \mc T_2$, with
\[
\mc T_2 = \frac{1}{33} \tau = \frac{1}{(8\cdot 2)\cdot 2 +1} \tau \,.
\]
With a third application of the same scheme, stated with $(10+1)\rho_k$ in place of $\rho_k$ in Eq.~\eqref{lemma_massa}, and with $(10+3)\rho_k$ in place of $3\rho_k$ in Eq.~\eqref{lemma_massa2}, always performing  the iteration in the last interval of width $\rho_k/2$ in both lemmata, we get that the support of $\omega_{i,\eps} (\cdot, t)$ is contained in $\Sigma(B^i_\eps(t) | (10+5) \rho_k) \subseteq \Sigma \left(z_i (t) | (10+5+1) \rho_k \right)$, for any $\eps$ small enough, up to the time $\mc T(3) = \mc T_1 + \mc T_2 + \mc T_3$, with
\[
\mc T_3 = \frac{1}{(13\cdot 2)\cdot 2 +1} \tau \,.
\]
We proceed further, considering at the $n^{\textnormal{th}}$ step $[5(n-1)+1]\rho_k$ in place of $\rho_k$ in Eq.~\eqref{lemma_massa}, and $[5(n-1)+3]\rho_k$ in place of $3\rho_k$ in Eq.~\eqref{lemma_massa2}, always performing  the iteration in the last interval of width $\rho_k/2$ in both lemmata. We get in this way that the support of $\omega_{i,\eps} (\cdot, t)$ is contained in $\Sigma(B^i_\eps(t) | 5n \rho_k) \subseteq \Sigma \left(z_i (t) | (5n+1) \rho_k \right)$ for any $\eps$ small enough up to the time\footnote{We remark this is achieved by using, in place of Eq.~\eqref{t1-tau}, the estimate 
\[
t_1-\sigma \ge \frac{1}{3C_A} \log\frac{g(t_1)}{g(\sigma)} \ge \frac{1}{3C_A} \log\frac{10j}{10j-1} > \frac{\rme^{-4}}{(20 j-7) C_A} \ge \frac{\tau}{[(5j-2)\cdot 2]\cdot 2+1}\,,
\]
where the third (strict) inequality is easily checked to be true for any integer $j\ge 1$.}
\begin{align*}
\mc T(n) = \sum_{j=1}^n \mc T_j & = \sum_{j=1}^n \frac{\tau}{[ (5j-2)\cdot 2]\cdot 2+1} \sim \frac{\tau}{20} \log n \quad \text{ as } n \to \infty\,.
\end{align*}

The above procedure is correct as long as $\Sigma(B^i_\eps(t) | 5j \rho_k) \subseteq \Sigma \left(z_i (t) | (5j+1) \rho_k \right)$ for any $j \le n$ and $\eps$ small enough (since this guarantees that $T_{\eps,\beta,\eta} > \mc T(j)$ for $j\le n$, see Step 2). This is clearly true if $n$ is arbitrary but fixed independent of $\eps$, provided $k$ and $\eta$ are chosen according to Eq.~\eqref{beta_barra} and \eqref{eta2}. But there is room to go further, by  choosing $n$ gently diverging as $\eps \to 0$.

Indeed, fix $0<k'<k-\beta$ and assume that $\eta$ satisfies the stronger condition
\[
\eta < \min\left\{\frac{\tau}{20}k', \frac{\alpha-1}{2(C_F+D)}\right\}.
\]
In this way, if we choose $n = n_\eps =|\log\eps|^{k'}$ then $\mc T(n_\eps) > \eta\log |\log\eps|$, eventually as $\eps \to 0$. Moreover, the above scheme can be successfully iterated for any $j \le n_\eps$. Indeed, by its application,
\[
R_t \le 5 j \rho_k \le 5 \frac{|\log\eps|^{k'}}{|\log\eps|^k} \qquad \forall\, t\in [0,\mc T(j) \wedge T_{\eps,\beta,\eta}] \,,
\]
which gives the inclusion $\Lambda_{i,\eps}(t) \subseteq \Sigma \left(z_i (t) | (5j+1) \rho_k \right)$ by Eq.~\eqref{B-z}. Since $\beta < k-k'$, this in turn implies $T_{\eps,\beta} \ge \mc T(j)$ for any $j\le n_\eps$ and any $\eps$ small enough. Theorem \ref{teoEA} thus follows with $\zeta=\eta$.
\end{proof}

\appendix

\section{Proofs of Eqs.~\eqref{H3} and \eqref{H4}}
\label{app:A}

Without loss of generality, we assume $\omega_{i,\eps}(x,t) \ge 0$. We write,
\[
\begin{split}
H_3 & = \int\! \rmd x\, \nabla W_{R,h}(x-B^i_\eps(t)) \cdot \int\! \rmd y \, K(x-y)\, \omega_{i,\eps}(y,t)\, \omega_{i,\eps}(x,t)  \\ & = \frac 12 \int\! \rmd x \! \int\! \rmd y\, [\nabla W_{R,h}(x-B^i_\eps(t)) - \nabla W_{R,h}(y-B^i_\eps(t))] \\ & \qquad  \cdot K(x-y) \, \omega_{i,\eps}(x,t)\,  \omega_{i,\eps}(y,t) \\ 
H_4 & =   \int\! \rmd x\, \nabla W_{R,h}(x-B^i_\eps(t)) \cdot \int\! \rmd y \,[F_{\eps, 1}(x,t)-F_{\eps, 1}(y,t)]\, \omega_{i,\eps}(y,t)\, \omega_{i,\eps}(x,t)\,, 
\end{split}
\]
where the second expression of $H_3$ is due to the antisymmetry of $K$.

Concerning $H_3$, we introduce the new variables $x'=x-B^i_\eps(t)$, $y'=y-B^i_\eps(t)$, define $\tilde\omega_{i,\eps}(z,t) := \omega_{i,\eps}(z+B^i_\eps(t),t)$, and let
\[
f(x',y') = \frac 12 \tilde\omega_{i,\eps}(x',t)\, \tilde\omega_{i,\eps}(y',t) \, [\nabla W_{R,h}(x')-\nabla W_{R,h}(y')] \cdot K(x'-y') \,,
\]
so that $H_3 = \int\!\rmd x' \! \int\!\rmd y'\,f(x',y')$. We observe that $f(x',y')$ is a symmetric function of $x'$ and $y'$ and that, by Eq.~\eqref{W1}, a necessary condition to be different from zero is if either $|x'|\ge R$ or $|y'|\ge R$. Therefore,
\begin{equation*}
\begin{split}
H_3  &= \bigg[ \int_{|x'| > R}\!\rmd x' \! \int\!\rmd y' + \int\!\rmd x' \! \int_{|y'| > R}\!\rmd y' -  \int_{|x'| > R}\!\rmd x' \! \int_{|y'| > R}\!\rmd y'\bigg]f(x',y') \\ & = 2 \int_{|x'| > R}\!\rmd x' \! \int\!\rmd y'\,f(x',y')  -  \int_{|x'| > R}\!\rmd x' \! \int_{|y'| > R}\!\rmd y'\,f(x',y') \\ & = H_3' + H_3'' + H_3'''\,,
\end{split}
\end{equation*}
with 
\begin{equation*}
\begin{split}
H_3' & = 2 \int_{|x'| > R}\!\rmd x' \! \int_{|y'| \le \frac{R}{2}}\!\rmd y'\,f(x',y') \,, \quad H_3'' = 2 \int_{|x'| > R}\!\rmd x' \! \int_{|y'| > \frac{R}{2}}\!\rmd y'\,f(x',y')\,, \\ H_3''' & = -  \int_{|x'| > R}\!\rmd x' \! \int_{|y'| > R}\!\rmd y'\,f(x',y')\,.
\end{split}
\end{equation*}
By the assumptions on $W_{R,h}$, we have $\nabla W_{R,h}(z) = \eta_h(|z|) z/|z|$ with $\eta_h(|z|) =0$ for $|z| \le R$. In particular, $\nabla W_{R,h}(y') = 0$ for $|y'| \le R/2$, hence
\[
H_3' =  \int_{|x'| > R}\!\rmd x' \, \tilde\omega_{i,\eps}(x',t) \eta_h(|x'|) \,\frac{x'}{|x'|} \cdot  \int_{|y'| \le \frac{R}{2}}\!\rmd y'\, K(x'-y') \, \tilde\omega_{i,\eps}(y',t)\,.
\]
In view of  Eq.~\eqref{W2}, $|\eta_h(|z|)| \le C_W/h$, so that 
\begin{equation}
\label{a1'}
|H_3'| \le \frac{C_W}{h} m^i_\eps(R,t) \sup_{|x'| > R} |\widetilde H(x')|\,,
\end{equation}
with
\[
\widetilde H(x') = \frac{x'}{|x'|}\cdot  \int_{|y'| \le \frac{R}{2}}\!\rmd y'\, K(x'-y') \, \tilde\omega_{i,\eps}(y',t) \,.
\]
Now, recalling Eq.~\eqref{nucleoK} and using that $x'\cdot (x'-y')^\perp=-x'\cdot y'^\perp$, we get,
\[
\widetilde H(x') = \frac{1}{2\pi} \int_{|y'|\le \frac{R}{2}}\! \rmd y'\, \frac{x'\cdot y'^\perp}{|x'||x'-y'|^2}\,  \tilde\omega_{i,\eps}(y',t) \,.
\]
By Eq.~\eqref{cen_vort}, $\int\! \rmd y'\,  y'^\perp\,  \tilde\omega_{i,\eps}(y',t) = 0$, so that
\begin{equation}
\label{in H_13}
\widetilde H(x')  = \widetilde H'(x')-\widetilde H''(x')\,, 
\end{equation}
where
\begin{eqnarray*}
&& \widetilde H'(x') = \frac{1}{2\pi}  \int_{|y'|\le \frac{R}{2}}\! \rmd y'\, \frac {x'\cdot y'^\perp}{|x'|}\, \frac {y'\cdot (2x'-y')}{|x'-y'|^2 \ |x'|^2} \,  \tilde\omega_{i,\eps}(y',t) \,, \\ && \widetilde H''(x')= \frac{1}{2\pi} \int_{|y'|> \frac{R}{2}}\! \rmd y'\, \frac{x'\cdot y'^\perp}{|x'|^3}\,  \tilde\omega_{i,\eps}(y',t) \,.
\end{eqnarray*}
We notice that if $|x'| > R$ then $|y'| \le \frac{R}{2}$ implies $|x'-y'|\ge \frac{R}{2}$ and $|2x'-y'|\le |x'-y'|+|x'|$. Therefore, for any $|x'| > R$,
\[
|\widetilde H'(x')| \le \frac 1\pi \bigg[\frac{1}{|x'|^2 R} + \frac{2}{|x'|R^2} \bigg]  \int_{|y'|\le \frac{R}{2}} \! \rmd y'\, |y'|^2 \,  \tilde\omega_{i,\eps}(y',t) \le \frac{3 I^i_\eps(t)}{\pi R^3}\,.
\]
To bound $\widetilde H''(x')$, by Chebyshev's inequality, for any $|x'| > R$ we have,
\[
|\widetilde H''(x')| \le \frac{1}{2\pi |x'|^2} \int_{|y'|> \frac{R}{2}}\! \rmd y'\, |y'| \tilde\omega_{i,\eps}(y',t) \le \frac{I^i_\eps(t)}{ \pi R^3} \,.
\]
From Eqs.~\eqref{a1'}, \eqref{in H_13}, and the previous estimates, we conclude that
\begin{equation}
\label{H_14b}
|H_3'| \le \frac{4C_W I^i_\eps(t)}{\pi R^3  h} m^i_\eps(R,t)\,.
\end{equation}
Now, by Eq.~\eqref{W3} and then applying the Chebyshev's inequality,
\begin{align}
\label{badterm}
|H_3''| + |H_3'''| & \le \frac{C_W}{\pi h^2} \int_{|x'| \ge R}\!\rmd x' \! \int_{|y'| \ge \frac{R}{2}}\!\rmd y'\,\tilde\omega_{i,\eps}(y',t) \,  \tilde\omega_{i,\eps}(x',t) \nonumber \\ & = \frac{C_W}{\pi h^2}m^i_\eps(R,t)   \int_{|y'| \ge \frac{R}{2}}\!\rmd y'\, \tilde\omega_{i,\eps}(y',t)  \le \frac{4C_W I^i_\eps(t)}{\pi R^2h^2} m^i_\eps(R,t)\,.
\end{align}
Gathering together the previous estimates, Eq.~\eqref{H3} follows.

Concerning $H_4$, we observe that by Eq.~\eqref{W1} the integrand is different from zero only if $R\le |x-B^i_\eps(t)|\le R+h$. Therefore, by Lemma \ref{comesonoFeps} and Eq.~\eqref{W2} we have, using again the variables $x'=x-B^i_\eps(t)$, $y'=y-B^i_\eps(t)$,
\[
\begin{split}
|H_4| & \le \frac{2C_W C_F}{h} \int_{|x'|\ge R}\! \rmd x'  \tilde\omega_{i,\eps}(x',t)  \int_{|y'|> R}\! \rmd y'\, \tilde\omega_{i,\eps}(y',t) \\ &\quad + \frac{C_W C_F }{h} \int_{R \le |x'|\le R+h}\! \rmd x'  \tilde\omega_{i,\eps}(x',t) \int_{|y'| \le R}\! \rmd y'\,|x'- y'| \,  \tilde\omega_{i,\eps}(y',t) \,.
\end{split}
\]
This bound implies Eq.~\eqref{H4} by using the Chebyshev's inequality in the first integral and that $|x'-y'| \le 2R+h$ in the domain of integration of the last integral.

\bigskip
\noindent {\textbf{Acknowledgments}}.
Work performed under the auspices of GNFM-INDAM and the Italian Ministry of the University (MIUR).

\end{document}